\let\mathcal\mathscr
\numberwithin{equation}{section}
\newtheorem{theorem}{Theorem}[section]
\newtheorem{lemma}[theorem]{Lemma}
\newtheorem{proposition}[theorem]{Proposition}
\newtheorem{corollary}[theorem]{Corollary}
\newtheorem{remark}[theorem]{Remark}
\newtheorem{definition}[theorem]{Definition}
\renewcommand{\phi}{\varphi}
\newcommand{\ZZ}{\mathbb{Z}}
\newcommand{\NN}{\mathbb{N}}
\newcommand{\RR}{\mathbb{R}}
\renewcommand{\emptyset}{\varnothing}
\renewcommand{\leq}{\leqslant}
\renewcommand{\geq}{\geqslant}
\newcommand{\x}{\mathbf{x}}
\renewcommand{\c}{\mathbf{c}}
\renewcommand{\b}{\mathbf{b}}
\renewcommand{\a}{\mathbf{a}}
\newcommand{\btheta}{\boldsymbol{\theta}}
\newcommand{\bgamma}{\boldsymbol{\gamma}}
\title{Integral solutions to systems of diagonal equations}
\author{Nick Rome}
\email{rome@tugraz.at}
\address{TU Graz, Institute of Analysis and Number Theory, Kopernikusgasse 24/II, 8010 Graz, Austria.}
\author{Shuntaro Yamagishi}
\email{shuntaro.yamagishi@ist.ac.at}
\address{IST Austria, Am Campus 1, 3400 Klosterneuburg, Austria.}
\begin{document}
\maketitle

\begin{abstract}
In this paper, we obtain an asymptotic formula for the number of integral solutions to a system of diagonal equations.
We obtain an asymptotic formula for the number of solutions with variables restricted to smooth numbers as well.
We improve the required number of variables compared to previous results by
incorporating recent progress on Waring's problem and the resolution of the main conjecture in Vinogradov's mean value theorem.
\end{abstract}

\section{Introduction}\label{sec:}

Consider the system of equations defined by
\begin{eqnarray}\label{eq:msos}
m_{1,1} x_{1}^d + \cdots + m_{1,n} x_n^d &=& \mu_1 \label{system} \\
                               &\vdots& \notag \\
m_{R, 1}  x_{1}^d + \cdots + m_{R, n} x_n^d &=& \mu_R, \notag
\end{eqnarray}
which we denote by $M \x^d = \boldsymbol{\mu}$,
where $M = [m_{i, j}]_{ \substack{ 1 \leq i \leq R \\ 1 \leq j \leq n } }$ is the coefficient matrix with integer entries,
$\x^d = \begin{bmatrix}
          x_1^d \\
          \vdots  \\
          x_n^d
        \end{bmatrix}$
and
$\boldsymbol{\mu} = \begin{bmatrix}
          \mu_1 \\
          \vdots  \\
          \mu_R
        \end{bmatrix} \in \ZZ^R.$
The system of diagonal equations (\ref{system}) with $\boldsymbol{\mu} = \mathbf{0}$ was first studied by Davenport and Lewis \cite[Lemma 32]{DavenportLewis}
who established the following.
\begin{theorem}[Davenport and Lewis]
Let $d \geq 3$ and $\boldsymbol{\mu} = \mathbf{0}$. Suppose that all $n$ variables occur explicitly in the equations (\ref{system}). Suppose that any linear combination, not identically zero, of
the $R$ rows of $M$ contains more than $(2 H + 3d - 1)R$ non-zero entries, where $H = \lfloor 3 d \log R d\rfloor$. Suppose the equations (\ref{system}) have a non-singular solution in every $p$-adic field, and further, if $d$ is even, a real non-singular solution. Then the equations (\ref{system}) have infinitely many solutions in integers.
\end{theorem}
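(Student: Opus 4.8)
The plan is to attack this via the Hardy--Littlewood circle method, treating the system \(M\x^d = \boldsymbol{0}\) simultaneously through an \(R\)-dimensional integral. First I would set up the counting function \(N(P) = \#\{\x \in [-P,P]^n \cap \ZZ^n : M\x^d = \boldsymbol{0}\}\) and write it as
\[
N(P) = \int_{[0,1)^R} \prod_{j=1}^n \Bigl( \sum_{|x|\leq P} e\bigl( (\bal^{\mathsf T} M)_j \, x^d \bigr) \Bigr) \, \d\bal,
\]
where \(\bal \in [0,1)^R\) is dual to the \(R\) equations. The key structural observation is that the hypothesis on linear combinations of the rows of \(M\) is exactly what is needed to control the minor arcs: for \emph{any} \(\bal\) not close to a rational point of small denominator, the linear form \(\bal^{\mathsf T} M\) has many coordinates \((\bal^{\mathsf T}M)_j\) that are "generic", so that the corresponding Weyl sums \(\sum_{|x|\leq P} e((\bal^{\mathsf T}M)_j x^d)\) exhibit cancellation. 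Concretely I would fix a suitable \(\nu = \nu(d)\) (the number of variables needed for a single equation of degree \(d\), coming from Weyl-type or Vinogradov-type estimates) and use the hypothesis that every nonzero row-combination has more than \((2H+3d-1)R\) nonzero entries to guarantee that after a Dirichlet/Farey dissection we always have \(\geq \nu\) variables whose argument is on a minor arc.

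The main steps, in order, would be: (1) Dirichlet dissection of \([0,1)^R\) into major arcs \(\mathfrak{M}\) (neighbourhoods of \(\a/q\) with \(q\) small) and minor arcs \(\mathfrak{m}\); (2) \textbf{Minor arc estimate}: on \(\mathfrak{m}\), use the row-condition to extract enough variables in "bad position" and apply Weyl's inequality (or Hua's lemma / the Vinogradov mean value bound, though Davenport--Lewis predates the latter) to each, combined with a trivial bound \(O(P)\) on the remaining ones, to show \(\int_{\mathfrak{m}} \ll P^{n - R d - \delta}\) for some \(\delta > 0\); here the choice \(H = \lfloor 3d \log Rd\rfloor\) is calibrated so that the power saving beats the \(\PP\)-adic/Euler-product losses. (3) \textbf{Major arc analysis}: on \(\mathfrak{M}\) approximate each exponential sum by its standard main term \(q^{-1} S(\a,q) v(\beta)\), sum/integrate to produce the singular series \(\mathfrak{S}\) and singular integral \(\mathfrak{J}\), giving \(\int_{\mathfrak{M}} = \mathfrak{S}\,\mathfrak{J}\,P^{n-Rd} + o(P^{n-Rd})\). (4) \textbf{Positivity of the local factors}: show \(\mathfrak{J} > 0\) using the real non-singular solution (and the evenness caveat when \(d\) is even), and show \(\mathfrak{S} = \prod_p \sigma_p\) converges with each \(\sigma_p > 0\) using the \(p\)-adic non-singular solutions together with Hensel's lemma; again the large number of nonzero entries in every row-combination is what forces \(\sigma_p\) to be bounded below uniformly for large \(p\). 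Combining (2)--(4) yields \(N(P) \to \infty\), in fact with an asymptotic, hence infinitely many integral solutions.

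The hard part will be the minor-arc bound, specifically the bookkeeping that converts the global hypothesis "every nonzero combination of rows has \(> (2H+3d-1)R\) nonzero entries" into a \emph{usable} pointwise statement at each \(\bal \in \mathfrak{m}\): one has to show that for every \(\bal\) there is a choice of \(R\) "pivot" columns so that among the remaining columns at least \(\nu\) have \((\bal^{\mathsf T}M)_j\) admitting a good rational approximation with large denominator (hence a genuine Weyl saving), and this requires a pigeonhole/linear-algebra argument over the lattice generated by the rows of \(M\). A secondary technical point is ensuring the \(\delta\)-power saving on \(\mathfrak{m}\) is uniform enough to survive integration in \(R\) dimensions — i.e. that the minor arcs do not themselves carry a spurious main-order contribution — which is precisely why one needs the coefficient \(2H+3\) (rather than something smaller) multiplying \(R\): it provides \(R\)-fold worth of savings plus a little extra. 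Everything else (major arcs, singular series, singular integral) is routine once the minor-arc estimate is in hand, following the classical template of Davenport's work on additive equations.
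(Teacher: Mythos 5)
This theorem is quoted from Davenport and Lewis without proof: the paper cites \cite[Lemma 32]{DavenportLewis} and treats the result as background, developing its own circle-method machinery only for the refined Theorems \ref{main2} and \ref{main2+}. So there is no ``paper's own proof'' of this particular statement for me to compare against verbatim; what I can do is measure your sketch against the framework the paper does set up, which is the modern descendant (via Br\"{u}dern--Cook) of the Davenport--Lewis argument.

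Your outline gets the global structure right: an $R$-dimensional circle-method integral, major/minor dissection, local densities from the major arcs, and a minor-arc saving driven by the row hypothesis. Two points, though, deserve pushback. First, the step you flag as hard --- ``for every $\bal$ find $R$ pivot columns plus $\nu$ columns with a genuine Weyl saving'' --- is not how either the paper or Br\"{u}dern--Cook proceed, and a per-$\bal$ pigeonhole is considerably messier than what is needed. The cleaner route, which the paper makes explicit, is to observe that the row hypothesis is equivalent (by a Rado/Low--Pitman--Wolff--type argument) to the existence of \emph{many pairwise disjoint $R\times R$ invertible submatrices}; one then fixes such a partition $\mathfrak D_1,\dots,\mathfrak D_T$ once and for all, uses the simple linear-algebra fact that $\btheta\in\mathfrak m$ forces some form $\c\cdot\btheta$ with $\c\in\mathfrak D_T$ to lie on a one-dimensional minor arc (Lemma \ref{lem:linalg}), extracts that single small Weyl sum, and controls the remaining blocks by H\"older's inequality followed by the change of variables $\bgamma\mapsto M(\mathfrak D_\ell)^{t}\bgamma$, reducing everything to one-dimensional mean-value integrals (Hua's lemma in your setting). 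Second, your phrase ``Weyl on $\nu$ of them and trivial on the rest'' will not by itself give the required power saving $P^{n-Rd-\delta}$; the bulk of the exponent $n-Rd$ has to come from \emph{mean-value} estimates over $[0,1]^R$, not from pointwise Weyl savings. You parenthetically mention Hua and Vinogradov, so you seem aware of this, but the proposal does not organize the estimate so that the mean-value input is doing the heavy lifting; that organization (H\"older across disjoint blocks, change of variables) is precisely the content of Proposition \ref{prop:minor}. One further small remark: with $\boldsymbol\mu=\mathbf 0$ the system is homogeneous of degree $d$, so a single nonzero integral solution already produces infinitely many by scaling; the circle method of course gives far more (an asymptotic count), but the stated conclusion is weaker than the estimate you would be proving.
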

In fact, they obtained an asymptotic formula for the number of solutions.
Their main results \cite[Theorems 1 and 2]{DavenportLewis} are consequences of this theorem and require
$$
n \geq \begin{cases} \lfloor 9 R^2 d \log (3 R d) \rfloor & \mbox{if $d$ is odd}, \\
\lfloor 48 R^2 d^3 \log (3 R d^2) \rfloor & \mbox{if $d \geq 4$ is even},
\end{cases}
$$
for the conclusions to hold. By incorporating the breakthrough on Waring's problem by Vaughan \cite{Va89}, Br\"{u}dern and Cook \cite{BC}
improved the number of variables required to
$$
n > n_0(d) R,
$$
where $n_0(d) = 2 d (\log  d + O(\log \log d) )$, under a suitable ``rank condition'' on the coefficient matrix $M$.
They also obtained an asymptotic formula for the number of solutions but with variables restricted to smooth numbers, which  in turn provided a lower bound for the number of solutions in positive integers.

Since the release of these two papers, there has been great progress regarding Waring's problem
(for example, by Wooley \cite{Woooo92}, \cite{Woo95} and more recently by Wooley and Br\"{u}dern \cite{BruWoo})
and also the resolution of the main conjecture in Vinogradov's mean value theorem (see the work by Bourgain, Demeter and Guth \cite{BDG},
and by Wooley \cite{WooCub, Wooley}). The purpose of this paper is to incorporate these recent progress to improve the required number of variables 
in both the setting of solutions in positive integers as in \cite{DavenportLewis}
and in the smooth numbers as in \cite{BC}. 

There have been a number of results regarding pairs of diagonal equations, 
in which the improvements have been achieved by making use of various developments in the theory of smooth Weyl sums.    
For example,  the work of Parsell \cite{parsellpairs} on pairs of equations of small degrees,
Parsell and Wooley \cite{quintic} on pairs of quintic equations, 
and 
Br{\"u}dern and Wooley \cite{cubicpairs} on pairs of cubic equations. 
For larger systems of diagonal equations, there are the papers of  
Br{\"u}dern and Wooley \cite{cubicsystem} on systems of cubic equations,
and 
of  Brandes and Parsell \cite{brandesparsell} 
and Brandes and Wooley \cite{JuTr}
on systems of equations involving different degrees. These works assume that the system is 
``highly non-singular'' which is to say that any $R \times R$ submatrix of the coefficient matrix is invertible.  
Our work is instead in line with \cite{DavenportLewis} and \cite{BC} which hold for systems of diagonal equations of the same (arbitrary) degree with slightly less restrictive conditions on the underlying coefficient matrix.

For $X \geq 1$ and $\mathfrak{B} \subseteq \NN$, we introduce the following counting function
\begin{align*}
N( \mathfrak{B}; X) = \#\{\x \in ( \mathfrak{B} \cap [1,X]  )^n:  M \x^d = \boldsymbol{\mu}  \}.
\end{align*}
Instead of restricting the singularity of the variety defined by our system of equations,
as in the work of Birch \cite{birch},
we will require a condition on how well the underlying coefficient matrix can be partitioned.
\begin{definition}
\label{TTT}
For an $R \times n$ matrix $A$ with $n \geq R$, we define $\Psi (A)$ to be the largest integer $\mathfrak{T}$ such that there exists
$$
\{ \mathfrak{D}_1, \ldots, \mathfrak{D}_\mathfrak{T} \},
$$
where each $\mathfrak{D}_i$ is a linearly independent set of $R$ columns of $A$ and $\mathfrak{D}_i \cap \mathfrak{D}_j \neq \emptyset$ if $i \neq j$.
\end{definition}

\begin{remark}
There are at least two ways to obtain lower bounds for $\Psi(M)$: by studying the ranks of submatrices (thanks to a result of Low--Pitman--Wolff \cite[Lemma 1]{LPW}) as in \cite{flores}, or by algorithmically enumerating sets of $R$ linearly independent columns as in \cite{msos}. In these two papers,
lower bounds of the form a constant times $\frac{n}{R}$ were obtained for coefficient matrices related to $n \times n$ magic squares.
\end{remark}

The following are the main results of this paper.
\begin{theorem}
\label{main2}
Let $d \geq 2$ and  $T_{\text{int.}}(d)$ be as recorded in Table 1.
If $\Psi(M) \geq T_{\text{int.}}(d) + 1$,
then there exists $\gamma > 0$ for which
\begin{eqnarray}
\notag
N(\NN; X) =  \mathfrak{S} \mathfrak{I} X^{n - d R} + O(  X^{n  - d R - \gamma} ),
\end{eqnarray}
where $\mathfrak{S}$ is the singular series defined in (\ref{singser}) and $\mathfrak{I}$ is the singular integral defined in (\ref{singint}).
We remark that $T_{\text{int.}}(d) \leq \min \{2^d, d(d+1) \}$ for all $d \geq 2$.
\end{theorem}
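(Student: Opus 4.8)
The plan is to attack $N(\NN;X)$ by the Hardy--Littlewood circle method, exploiting the partition structure guaranteed by $\Psi(M) \geq T_{\text{int.}}(d)+1$ to reduce the problem to well-understood mean-value estimates for individual Weyl sums. First I would introduce the exponential sum $f(\bal;X) = \sum_{1 \leq x \leq X} e(\al_1 x^d + \cdots + \al_R x^d)$ attached to a given column, so that the generating function for $N(\NN;X)$ is a product $\prod_{j=1}^n f_j(\bal;X)$ over the $n$ columns of $M$ (the $j$-th factor being $f$ evaluated at the linear form given by column $j$), and
\[
N(\NN;X) = \int_{[0,1)^R} \prod_{j=1}^n f_j(\bal;X)\, \d\bal.
\]
Using Definition~\ref{TTT}, I would fix a system $\mathfrak{D}_1,\dots,\mathfrak{D}_{\mathfrak{T}}$ of $\mathfrak{T} = \Psi(M)$ pairwise-intersecting linearly independent $R$-subsets of columns, and group the remaining $n - R\mathfrak{T} + (\mathfrak{T}-1)\cdot(\text{overlap})$ columns suitably; the key point is that each $\mathfrak{D}_i$, being a set of $R$ independent columns, gives after a unimodular-type change of variables on $\bal$ an honest product of $R$ one-dimensional Weyl sums in separate variables, so Hölder's inequality lets one bound the full integrand by a product of $2^d$-th (or appropriate) moments of single Weyl sums $\int_0^1 |g(\al;X)|^s\,\d\al$.

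Next I would carry out the standard major/minor arc dissection. On the minor arcs, I would use the pointwise Weyl-type bound for $f(\bal;X)$ on at least one coordinate together with the moment estimates coming from Vinogradov's mean value theorem (now a theorem, via \cite{BDG}, \cite{WooCub}, \cite{Wooley}) and from the recent advances on Waring's problem (\cite{Va89}, \cite{Woooo92}, \cite{Woo95}, \cite{BruWoo}); this is precisely where the numerical value $T_{\text{int.}}(d)$ in Table~1 enters, since the number of intersecting independent column-sets one needs is dictated by how many Weyl sums must be ``spent'' to obtain a power-saving on the minor arcs, and the bounds $T_{\text{int.}}(d) \leq 2^d$ and $T_{\text{int.}}(d) \leq d(d+1)$ reflect respectively the classical $2^d$-variable Hua/Weyl approach and the Vinogradov route (which wins for large $d$). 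On the major arcs, the contribution factors into the singular series $\mathfrak{S}$ of (\ref{singser}) and the singular integral $\mathfrak{I}$ of (\ref{singint}), and I would show absolute convergence of $\mathfrak{S}$ and positivity/finiteness of $\mathfrak{I}$ using the same partition hypothesis (each $\mathfrak{D}_i$ controls an $R$-dimensional block, and $\mathfrak{T} \geq T_{\text{int.}}(d)+1$ blocks suffice to make the relevant $p$-adic and archimedean densities converge), yielding the main term $\mathfrak{S}\mathfrak{I} X^{n-dR}$ with a power-saving error $O(X^{n-dR-\gamma})$.

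The main obstacle I anticipate is the combinatorial bookkeeping in the minor-arc estimate: the sets $\mathfrak{D}_i$ overlap, so one cannot simply multiply independent mean values, and one must arrange the Hölder splitting so that every variable used in a moment estimate genuinely appears with the right multiplicity while the pointwise Weyl bound is extracted from a column lying outside the blocks being integrated. Making this work with the optimal exponent — so that exactly $T_{\text{int.}}(d)+1$ intersecting blocks suffice and the resulting $\gamma$ is positive — requires carefully interleaving the pairwise-intersection property of Definition~\ref{TTT} with the choice of which Weyl-sum moment (degree-$d$ Waring versus full Vinogradov system) to invoke for each range of $d$; verifying the inequality $T_{\text{int.}}(d) \leq \min\{2^d, d(d+1)\}$ for all $d \geq 2$ is then a finite check against the current records that I would relegate to the discussion of Table~1.
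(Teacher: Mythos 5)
Your overall strategy is the right one and matches the paper: set up the circle method over $[0,1]^R$ with exponential sums $S_\c(\btheta)$ indexed by the columns $\c$ of $M$, dissect into major and minor arcs, on the minor arcs use H\"{o}lder's inequality block-by-block over the linearly independent $R$-sets $\mathfrak{D}_\ell$ followed by a linear change of variables to reduce to one-dimensional mean values, and let the numerical thresholds from Hua/Weyl (giving $2^d$) and from Vinogradov's mean value theorem (giving the $d^2+O(d)$ bound) dictate $T_{\text{int.}}(d)$. So the skeleton of the proposal is essentially the paper's proof.

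However, there is a genuine gap, and it stems from a misreading of Definition~\ref{TTT}. You take the sets $\mathfrak{D}_1,\dots,\mathfrak{D}_{\mathfrak{T}}$ to be \emph{pairwise intersecting} (the definition as printed does say $\mathfrak{D}_i\cap\mathfrak{D}_j\neq\emptyset$, but that is a typo: every use of $\Psi(M)$ in the proofs --- Proposition~\ref{prop:minor}, Lemmas~\ref{lemA} and~\ref{lem:singint} --- requires \emph{pairwise disjoint} sets, and the remark about lower bounds of the form $c\cdot n/R$ only makes sense in the disjoint case). With the intersecting reading, the H\"{o}lder splitting you describe does not close: a single column may appear in several $\mathfrak{D}_\ell$, so after H\"{o}lder the moments of the associated Weyl sum overcount, and you cannot simply take the product of $T-1$ independent one-dimensional mean values. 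You flag exactly this as ``the main obstacle I anticipate,'' but it is not an obstacle to be overcome by bookkeeping --- with genuinely overlapping blocks the argument does not give the required power saving, and the whole point of the hypothesis is that the blocks are disjoint, which makes the product factor cleanly.

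Two further points you should make precise. First, you need a mechanism to pass from $\btheta$ lying on the $R$-dimensional minor arcs to a single linear form $\c_i\cdot\btheta$ lying on a one-dimensional minor arc; in the paper this is Lemma~\ref{lem:linalg}, which uses the invertibility of $M(\mathfrak{D})^t$ to show that if all $R$ phases $\c_i\cdot\btheta$ for a single block were well-approximable then $\btheta$ would be a major-arc point. Second, the column from which the pointwise Weyl bound is extracted is \emph{not} ``outside the blocks being integrated'': it is one of the $R$ columns of the block $\mathfrak{D}_T$, whose remaining $R-1$ columns are bounded trivially; the other $T-1$ disjoint blocks then each contribute a $(T-1)$-th moment via H\"{o}lder, and columns lying in no block are bounded trivially. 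This accounting is why the hypothesis is $\Psi(M)\geq T_{\text{int.}}(d)+1$ rather than $\geq T_{\text{int.}}(d)$: one block is spent on the Weyl pointwise saving, and $T_{\text{int.}}(d)$ disjoint blocks remain for the mean-value estimates of Lemma~\ref{lem:mvt}.
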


Given $1 \leq Z \leq X$, we denote the $Z$-smooth numbers by
$$
\mathcal{A}(X, Z) = \{ x \in [1, X] \cap \ZZ: \textnormal{prime } p| x \textnormal{ implies } p \leq Z  \}.
$$

\begin{theorem}
\label{main2+}
Let $d \geq 5$ and $T_{\text{smo.}}(d)$ be as recorded in Table 2.
If $\Psi(M) \geq T_{\text{smo.}}(d) + 1$, then for $\eta > 0$ sufficiently small, there exists $\gamma > 0$ such that
\begin{eqnarray}
\notag
N(\mathcal{A}(X, X^{\eta}) ; X) = c(\eta)^n \mathfrak{S} \mathfrak{I} X^{n - d R} + O(  X^{n  - d R} (\log X)^{ - \gamma} ),
\end{eqnarray}
where $\mathfrak{S}$ is the singular series defined in (\ref{singser}), $\mathfrak{I}$ is the singular integral defined in (\ref{singint})
and $c(\eta) > 0$ depends only $\eta$. We remark that $T_{\text{smo.}}(d) \leq  \lceil d(\log d + 4.20032)\rceil$ for all $d \geq 5$.
\end{theorem}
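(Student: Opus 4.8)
The plan is to establish Theorem~\ref{main2+} by the Hardy--Littlewood circle method, following the broad strategy of Br\"{u}dern--Cook \cite{BC} but feeding in modern smooth Weyl sum estimates and, crucially, exploiting the partition structure encoded by $\Psi(M)$. Write $e(\alpha) = e^{2\pi i \alpha}$ and, for $\bal = (\alpha_1, \dots, \alpha_R) \in \RR^R$, define the smooth exponential sum $f(\bal; X) = \sum_{x \in \mathcal{A}(X, X^\eta)} e\bigl( (\alpha_1, \dots, \alpha_R) M_{\cdot, x} \, x^d \bigr)$, where $M_{\cdot, x}$ denotes the $x$-th column of $M$; more precisely one forms a product over the $n$ variables, $F(\bal; X) = \prod_{j=1}^n g\bigl( (M^\top \bal)_j; X \bigr)$ with $g(\beta; X) = \sum_{x \in \mathcal{A}(X, X^\eta)} e(\beta x^d)$. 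Then $N(\mathcal{A}(X,X^\eta); X) = \int_{[0,1)^R} F(\bal; X) \, e(-\bal \cdot \boldsymbol{\mu}) \, d\bal$, and the task is to dissect $[0,1)^R$ into major arcs $\mathfrak{M}$ and minor arcs $\mathfrak{m}$, show the major arcs contribute the main term $c(\eta)^n \mathfrak{S} \mathfrak{I} X^{n-dR}$, and bound the minor arc contribution by $O(X^{n-dR}(\log X)^{-\gamma})$.

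First I would handle the minor arcs, which is where the hypothesis on $\Psi(M)$ does its work. The point of Definition~\ref{TTT} is that one can select $\mathfrak{T} = \Psi(M)$ pairwise-intersecting $R$-subsets $\mathfrak{D}_1, \dots, \mathfrak{D}_\mathfrak{T}$ of linearly independent columns; for any $\bal \in \mathfrak{m}$, a non-singularity/linear-algebra argument (as in \cite[\S2]{BC}) shows that for each $i$ at least one variable $j \in \mathfrak{D}_i$ has $(M^\top \bal)_j$ lying in a ``minor arc'' for the single-variable Weyl sum $g$. Since the $\mathfrak{D}_i$ pairwise intersect, a counting argument shows that among roughly $\mathfrak{T}$ variables the sum $g$ is small, so one gets a pointwise bound $|F(\bal; X)| \ll X^{n - \mathfrak{T}} \cdot (\text{saving in } \mathfrak{T} \text{ factors of } g)$. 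Combining this pointwise bound on all but $2R$ of the variables with a mean-value estimate for $\int |g|^{2s}$ over the remaining variables --- here is where the resolution of the main conjecture in Vinogradov's mean value theorem \cite{BDG,Wooley} and the smooth Weyl sum bounds of Wooley \cite{Woooo92,Woo95} and Br\"{u}dern--Wooley \cite{BruWoo} enter --- yields $\int_{\mathfrak{m}} |F| \ll X^{n-dR}(\log X)^{-\gamma}$ provided $\mathfrak{T} \geq T_{\text{smo.}}(d) + 1$, where $T_{\text{smo.}}(d)$ is calibrated precisely so that the number of ``extra'' variables beyond $2dR$ needed for the mean value estimate plus the $2R$ reserved variables fit. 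The bound $T_{\text{smo.}}(d) \leq \lceil d(\log d + 4.20032)\rceil$ comes from plugging the current records into this count.

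Next I would treat the major arcs. On $\mathfrak{M}$ one approximates each $g((M^\top\bal)_j; X)$ by $c(\eta) v(\beta_j; X)$ where $v$ is the standard complete exponential integral weight and $c(\eta)$ is the density constant for the smooth numbers (this is the ``fundamental lemma'' for smooth Weyl sums, available from the cited works once $\eta$ is small); the error in this replacement, summed over $\mathfrak{M}$, is absorbed because we again have $\Psi(M) > 2R$ spare factors to soak up losses. Multiplying out over the $n$ variables produces $c(\eta)^n$ times a product of local densities, which factorises in the usual way into the singular series $\mathfrak{S}$ (from (\ref{singser})) times the singular integral $\mathfrak{I}$ (from (\ref{singint})), with $\mathfrak{S}, \mathfrak{I}$ absolutely convergent and positive under the solubility hypotheses implicit in $\mathfrak{S}\mathfrak{I} \neq 0$ --- convergence of $\mathfrak{S}$ again uses $\Psi(M)$ large. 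The truncation error from restricting the singular integral and completing the singular series is $O(X^{n-dR-\delta})$ for some $\delta > 0$, comfortably inside the stated error term.

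I expect the main obstacle to be the minor-arc analysis, specifically making the interplay between the combinatorial structure of $\Psi(M)$ and the analytic mean-value input efficient enough to reach the claimed $T_{\text{smo.}}(d)$. One must carefully decide, for a given $\bal \in \mathfrak{m}$, which variables to estimate pointwise via a Weyl-type or smooth-Weyl-type bound and which to reserve for an $L^{2s}$ mean value, and the pairwise-intersection property of the $\mathfrak{D}_i$ has to be leveraged so that a \emph{single} bad variable per block suffices rather than needing control of a whole block; getting the bookkeeping to lose only $2R$ variables (rather than $R \cdot \Psi(M)$) is the crux. A secondary technical point is uniformity in $\eta$: the constant $\gamma$ and the implied constants must be shown independent of $\eta$ once $\eta$ is below a threshold, which requires the smooth-number estimates to be quoted in their uniform form. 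Everything else --- the major arc computation, the convergence of $\mathfrak{S}$ and $\mathfrak{I}$, the passage to a lower bound for integer solutions --- is routine given the cited literature.
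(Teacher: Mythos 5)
The broad skeleton (circle method, major/minor arc dissection, smooth Weyl sums for the mean value input) matches the paper, but the minor-arc bookkeeping you describe is not how the argument actually closes, and there are a couple of points that would cause real trouble if carried out as stated.

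First, a reading issue that propagates through your whole proposal. Definition~\ref{TTT} is stated with the condition ``$\mathfrak{D}_i \cap \mathfrak{D}_j \neq \emptyset$ for $i \neq j$'', which you faithfully take to mean pairwise-\emph{intersecting} blocks, and you then try to leverage that intersection property. But this is a typo in the paper: every proof that invokes the blocks (the minor-arc bound in Proposition~\ref{prop:minor}, the singular-series estimate in Lemma~\ref{lemA}, the singular-integral tail in Lemma~\ref{lem:singint}) explicitly takes $\mathfrak{D}_1, \ldots, \mathfrak{D}_T$ pairwise \emph{disjoint}. Disjointness is not cosmetic: it is exactly what lets one apply H\"{o}lder's inequality across the $T-1$ blocks to decouple $\int_{[0,1]^R}\prod_{\ell}\prod_{\c\in\mathfrak{D}_\ell}|S_\c|$ into a product of one-block integrals, and then within each block use the invertibility of $M(\mathfrak{D}_\ell)^t$ to change variables to $R$ independent one-dimensional mean values $\int_0^1 |g(\lambda_i)|^{T-1}\,\d\lambda_i$. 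With overlapping blocks neither the H\"{o}lder step nor the change of variables is available, and your proposed ``counting argument'' over intersecting blocks does not survive contact with the measure-theoretic part of the estimate.

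Second, the allocation of variables is different from (and weaker than) the paper's, and this is precisely the point you yourself flag as ``the crux'' without resolving. You aim for a \emph{pointwise} Weyl-type saving on roughly $\mathfrak{T}$ variables (one per block) and then a mean value over a small reserved set ($2R$, or $2sR$, variables). The paper does essentially the opposite: from $\btheta \in \mathfrak{m}_\mathfrak{L}$ and Lemma~\ref{lem:linalg}, one extracts a pointwise saving from a \emph{single} column of a \emph{single} block $\mathfrak{D}_T$ (if all $R$ phases $\c_i\cdot\btheta$ from that block were near rationals with small denominator, $\btheta$ would in fact lie on the major arcs); the remaining $R-1$ columns of that block and all columns outside the blocks are bounded trivially; and all $(T-1)R$ columns in the other blocks are fed, via H\"{o}lder with exponent $T-1$, into the smooth mean value $\int_0^1 |f|^{T-1}\,\d\lambda \ll X^{T-1-d}$ from Lemma~\ref{MVT1}. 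This is what forces the threshold $T-1 \geq T_{\text{smo.}}(d)$, i.e.\ $\Psi(M) \geq T_{\text{smo.}}(d)+1$, and it is why the threshold tracks the smooth-Waring constant $G(d)$ rather than $2dR$. Your scheme of many pointwise savings plus a small mean value would need the pointwise exponent to carry almost all of the $dR$ deficit, which the bounds in Lemmas~\ref{BClem}--\ref{min+} are nowhere near strong enough to do, so the bookkeeping does not close.

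Finally, a smaller point: in the major-arc analysis you suggest that ``$\Psi(M) > 2R$ spare factors'' are needed to absorb the approximation error from replacing each $g$ by $c(\eta) v(\beta_j)$. In fact the paper's Proposition~\ref{lem:major} never invokes $\Psi(M)$; the error from Lemma~\ref{lem33} is of size $X\mathfrak{L}^2/\log X$ per factor, and integrating over $\mathfrak{M}^+_\mathfrak{L}$ (which has measure $\ll \mathfrak{L}^{2R+1}X^{-dR}$) already gives $O(X^{n-dR}\mathfrak{L}^{2R+3}/\log X)$, which is acceptable once $A < 1/(2R+4)$. The role of $\Psi(M)$ is confined to the minor arcs and to the convergence of $\mathfrak{S}$ and $\mathfrak{I}$.

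In short: you have the right toolbox and the right high-level plan, but the pivotal step---one pointwise saving from one block via the linear-algebra lemma, H\"{o}lder across the other $T-1$ disjoint blocks into $R(T-1)$ smooth mean-value factors---is missing, and the ``pairwise-intersecting'' reading of $\Psi(M)$ should be corrected to ``pairwise disjoint''.
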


\begin{remark}
Instead of the condition $\Psi(M) \geq T_{\text{int.}}(d) + 1$ in Theorem \ref{main2},
we may assume that there exists an $R \times  (R T_{\text{int.}}(d) + 1)$ submatrix of $M$ with the property that after removing any one of its columns it still contains
$T_{\text{int.}}(d)$ pairwise disjoint $R \times R$ invertible submatrices; the same holds for  Theorem \ref{main2+} with $T_{\text{smo.}}(d)$ in place of $T_{\text{int.}}(d)$.
This is essentially the hypothesis assumed in \cite{BC}, but for simplicity we assume the former condition; to assume the latter condition, one needs to slightly modify  the proof of
Proposition \ref{prop:minor}.
\end{remark}

An immediate corollary is a lower bound for  $N(\NN; X)$ which requires a smaller value of $\Psi(M)$  than in Theorem \ref{main2+}.
\begin{corollary}\label{cor:lowerbound}
Let $d \geq 3$ and suppose $\Psi(M) \geq T_{\text{smo.}}(d) + 1$.
Then for a fixed $\boldsymbol{\mu}$ such that $\mathfrak{SI} >0$, we have
\begin{eqnarray}
\notag
N(\NN; X) \gg   X^{n - d R}.
\end{eqnarray}
\end{corollary}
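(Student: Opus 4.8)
The plan is to derive the bound from the smooth‑variable count. Since $\mathcal{A}(X,X^{\eta})\cap[1,X]\subseteq\NN\cap[1,X]$ for every $\eta>0$, we have
\[
N(\NN;X)\;\geq\;N(\mathcal{A}(X,X^{\eta});X).
\]

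For $d\geq 5$ this is immediate from Theorem~\ref{main2+}. Fixing $\eta>0$ small enough for that theorem to apply and using the hypothesis $\Psi(M)\geq T_{\text{smo.}}(d)+1$, we obtain
\[
N(\mathcal{A}(X,X^{\eta});X)=c(\eta)^{n}\,\mathfrak{S}\mathfrak{I}\,X^{n-dR}+O\bigl(X^{n-dR}(\log X)^{-\gamma}\bigr).
\]
As $\eta$ and $n$ are now fixed, $c(\eta)^{n}$ is a positive constant, and $\mathfrak{S}\mathfrak{I}>0$ by assumption, so the main term dominates for large $X$ and hence $N(\NN;X)\gg X^{n-dR}$.

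For the remaining degrees $d\in\{3,4\}$, which are not covered by Theorem~\ref{main2+}, there are two routes. If the tabulated values satisfy $T_{\text{int.}}(d)\leq T_{\text{smo.}}(d)$, then $\Psi(M)\geq T_{\text{smo.}}(d)+1\geq T_{\text{int.}}(d)+1$ and Theorem~\ref{main2} applies directly to $N(\NN;X)$, yielding $N(\NN;X)=\mathfrak{S}\mathfrak{I}\,X^{n-dR}+O(X^{n-dR-\gamma})\gg X^{n-dR}$. Otherwise one re-runs the Hardy--Littlewood dissection used in the proof of Theorem~\ref{main2+} but retains only a one‑sided bound: after the usual pruning the major arcs contribute $c(\eta)^{n}\mathfrak{S}\mathfrak{I}\,X^{n-dR}(1+o(1))\gg X^{n-dR}$, while the minor arcs are handled by distributing the integral over the $T_{\text{smo.}}(d)$ pairwise disjoint invertible $R\times R$ blocks furnished by $\Psi(M)\geq T_{\text{smo.}}(d)+1$, changing variables on each block, applying H\"{o}lder's inequality, and invoking the classical pointwise and mean‑value estimates for exponential sums over smooth cubes and smooth fourth powers (Vaughan, Wooley); with $T_{\text{smo.}}(d)$ as in Table~2 this makes the minor‑arc contribution $o(X^{n-dR})$, and subtracting gives $N(\mathcal{A}(X,X^{\eta});X)\gg X^{n-dR}$.

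The step I expect to need the most care is precisely this last point in the small‑degree range $d\in\{3,4\}$, where the smooth Weyl sum estimates are weakest and one must verify that, combined with the disjoint‑block structure measured by $\Psi(M)$, they still force the minor‑arc contribution strictly below the major‑arc main term; this is presumably why the full asymptotic of Theorem~\ref{main2+} is restricted to $d\geq 5$, whereas the coarser bounds needed for a one‑sided estimate are available for all $d\geq 3$. Everything else is a routine specialisation of the machinery already set up for Theorems~\ref{main2} and~\ref{main2+}.
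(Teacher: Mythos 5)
Your first paragraph is precisely the paper's intended argument: the corollary is labelled ``immediate'' because $\mathcal{A}(X,X^\eta)\cap[1,X]\subseteq\NN\cap[1,X]$ gives $N(\NN;X)\geq N(\mathcal{A}(X,X^\eta);X)$, and Theorem~\ref{main2+} together with $c(\eta)^n\mathfrak{S}\mathfrak{I}>0$ forces the right-hand side to be $\gg X^{n-dR}$. Nothing more is done in the paper.

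You are also correct to be suspicious about $d\in\{3,4\}$, but your two proposed workarounds do not close that gap, and cannot with the material in the paper. The quantity $T_{\text{smo.}}(d)$ is only defined in Table~2 for $d\geq 5$, Lemma~\ref{MVT1} (the smooth-Weyl mean value input) is stated only for $d\geq 5$, and Theorem~\ref{main2+} itself requires $d\geq 5$. Your route (i) asks whether $T_{\text{int.}}(d)\leq T_{\text{smo.}}(d)$, which cannot even be evaluated for $d=3,4$ and is in any case the wrong inequality (for every tabulated $d$ one has $T_{\text{smo.}}(d)<T_{\text{int.}}(d)$, which is exactly why the corollary is interesting). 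Your route (ii) would require a mean value estimate for smooth cubic and quartic Weyl sums playing the role of Lemma~\ref{MVT1}; such estimates exist in the literature (Vaughan, Wooley), but the paper neither records nor invokes them, so you would be proving a new ingredient rather than deriving a corollary. The honest reading is that the stated ``$d\geq 3$'' in Corollary~\ref{cor:lowerbound} is an imprecision in the paper and should be ``$d\geq 5$'' (matching Theorem~\ref{main2+} and the domain of Table~2); under that reading your first paragraph is the complete proof and the remainder of your write-up is unnecessary speculation.
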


We note that for a fixed choice of $\boldsymbol{\mu}$, by standard arguments, $\mathfrak{S} > 0$ if the equations (\ref{system}) have a non-singular solution in every $p$-adic field, and $\mathfrak{I} > 0$ if the equations (\ref{system}) have a real non-singular solution.
\newline

\begin{tabular}{ c c }
Table 1 & Table 2\\
\begin{tabular}{|c|c|}
\hline
$d$ & $T_{\text{int.}}(d)$ \\
\hline
2 & 4 \\
3 & 8 \\
4 & 15 \\
5& 23 \\
6 & 34 \\
7 & 47\\
8 & 61 \\
9 & 78\\
$\geq 10$ & $d^2 - d + 2 \lfloor \sqrt{2d+2} \rfloor - \theta(d) $\\
\hline
\end{tabular} &
%
%
%
 \begin{tabular}{|c|c|| c | c|}
\hline
$d$ & $T_{\text{smo.}}(d)$ & $d$ &$T_{\text{smo.}}(d)$ \\
\hline
5 &  19  & 13 & 81 \\
6&  25    & 14 & 89\\
7 & 33 & 15 & 97 \\
8 & 41  & 16 & 105\\
9 & 49 & 17 & 113\\
10 & 57 & 18 & 121\\
11 & 65 & 19 & 129 \\
12 & 73 & $\geq 20$ & $\lceil d(\log d + 4.20032)\rceil$ \\
\hline
\end{tabular} \\
%
\end{tabular}
\newline
\newline

The function $\theta(d) \in \{1, 2\}$ which appears in Table 1 is defined in (\ref{deftheta}).
The values of $T_{\text{int.}}(d)$ are described in Lemma \ref{lem:mvt}, and
they correspond to the smallest known number of variables $s$ required to produce an asymptotic formula for the number of representations of any sufficiently large natural number as a sum of $s$ $d^\text{th}$ powers (compare with \cite[Cor 14.7]{Wooley} for larger powers and \cite[Thm 4.1]{imrn} for intermediate powers).

For $d \geq 13$, the values of $T_{\text{smo.}}(d)$ correspond to the best known values of $G(d)$, the least number of variables required to represent every sufficiently large natural number as a sum of $d^\text{th}$ powers. Note the distinction that in this problem one asks only for the existence of a solution, and
not the asymptotic formula for the number of solutions. For smaller values of $d$, $T_{\text{smo.}}(d)$ is slightly larger than the best known values of $G(d)$,
which are $G(7) \leq 31, G(8) \leq 39, G(9) \leq  47, G(10)   \leq 55, G(11)\leq 63$ and $G(12)\leq 72$ as found in \cite{intermediate});
these values are obtained by considering solutions to the underlying Diophantine equations for which only some of the variables are restricted to the smooth numbers.

\subsection*{Acknowledgements}
NR was supported by FWF project ESP 441-NBL while SY by a FWF grant (DOI 10.55776/P32428).
The authors are grateful to J\"{o}rg Br\"{u}dern for helpful discussions regarding his paper \cite{BC}
and to Trevor Wooley for numerous helpful comments on an earlier version of this paper and on the state of the art in Waring's problem.

\subsection*{Notation}
We make use of the standard abbreviations $e(z) = e^{2\pi i z}$ and $e_q(z) = e^{\frac{2 \pi i z}{q}}$.
Given a vector $\a = (a_1, \ldots, a_R) \in \ZZ^R$, by $0 \leq \a \leq q$ we mean $0 \leq a_i \leq q$ for each $1 \leq i \leq R$.
We also let $|\bgamma| = \max_{1 \leq i \leq R} |\gamma_i|$ for any  $\bgamma \in \RR^R.$

\section{Preliminaries}
%
\subsection{Weyl sums}
In this section, we collect two results which are the main ingredients to prove Theorem \ref{main2}.
Both are consequences of the resolution of the main conjecture in Vinogradov's mean value theorem (by Bourgain, Demeter and  Guth \cite{BDG} and by Wooley \cite{WooCub, Wooley}).
\begin{lemma}\label{lem:weyl}
Let $d \geq 2$. Let $\alpha \in \RR$ and suppose that there exist $q \in \NN$ and $a \in \ZZ$  with $\gcd(q, a)=1$ such that $\vert \alpha - a/q \vert \leq q^{-2}$ and $q \leq X^d$.
We define
\[
\lambda(d) =
\begin{cases}
\frac{1}{2^{d-1}} & \mbox{if } 2 \leq d \leq 5, \\
\frac{1}{d(d - 1)}& \mbox{otherwise.}
\end{cases}
\]
Then
\[
\left \vert \sum_{1 \leq  x \leq X} e(\alpha x^d) \right \vert
\ll
X^{1+\varepsilon} ( q^{-1} + X^{-1} + q X^{-d})^{\lambda(d)},
\]
for any $\varepsilon > 0$.
\end{lemma}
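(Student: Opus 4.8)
The plan is to split according to whether $2 \le d \le 5$ or $d \ge 6$; this is exactly the range in which the inequality $2^{d-1} \le d(d-1)$ holds, so we are simply recording in each case the larger of the two admissible Weyl exponents, the classical one $2^{1-d}$ and the Vinogradov one $(d(d-1))^{-1}$ (the two coincide when $d=2$). Writing $S = \sum_{1\le x\le X} e(\alpha x^d)$, in both ranges the goal is a bound of the shape $|S|^{N} \ll X^{N+\varepsilon}(q^{-1}+X^{-1}+qX^{-d})$ with $N = 2^{d-1}$ in the first range and $N = d(d-1)$ in the second, after which one takes $N$-th roots; the hypothesis $q \le X^{d}$ enters only to ensure the bound is not vacuous.

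In the range $2 \le d \le 5$ I would run the classical Weyl differencing argument. Applying the Cauchy--Schwarz squaring step $d-1$ times reduces $|S|^{2^{d-1}}$ to $X^{2^{d-1}-d+\varepsilon}$ times a sum over difference parameters $h_1,\dots,h_{d-1}$ of an inner linear exponential sum bounded by $\min\bigl(X,\Vert d!\,h_1\cdots h_{d-1}\,\alpha\Vert^{-1}\bigr)$. Summing over the $h_i$ via the standard estimate for $\sum_{m\le M}\min(X,\Vert m\alpha\Vert^{-1})$ with $M \asymp X^{d-1}$, using the rational approximation $\vert\alpha-a/q\vert\le q^{-2}$ together with a divisor bound, gives $\ll X^{d+\varepsilon}(q^{-1}+X^{-1}+qX^{-d})$; hence $|S|^{2^{d-1}}\ll X^{2^{d-1}+\varepsilon}(q^{-1}+X^{-1}+qX^{-d})$ and the claim follows with $\lambda(d)=2^{1-d}$. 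This is nothing other than Weyl's inequality in its familiar form, so in practice one would simply cite it.

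In the range $d \ge 6$ I would instead deduce the estimate from the resolution of the main conjecture in Vinogradov's mean value theorem \cite{BDG}. With the choice $s = \tfrac12 d(d-1)$, which satisfies $s \le \tfrac12 d(d+1)$, the main conjecture yields $J_{s,d}(X) \ll X^{s+\varepsilon}$, where $J_{s,d}(X)$ denotes the number of solutions of the Vinogradov system of degree $d$ with variables in $[1,X]$. Feeding this into the standard passage from a mean value bound to a pointwise Weyl estimate — a Hardy--Littlewood-type dissection combined with the counting interpretation of $J_{s,d}(X)$ and, once again, the elementary estimate for $\sum_m\min(X,\Vert m\alpha\Vert^{-1})$ together with $\vert\alpha-a/q\vert\le q^{-2}$ — produces $|S| \ll X^{1+\varepsilon}(q^{-1}+X^{-1}+qX^{-d})^{1/(2s)}$ with $1/(2s) = 1/(d(d-1)) = \lambda(d)$. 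This deduction is itself standard and appears in essentially this form in \cite{Wooley}, so I would cite it rather than reproduce the calculation.

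The only point requiring genuine care is the $d\ge 6$ transference: one must check that the dissection-and-completion argument loses nothing, so that the final exponent comes out as exactly $(d(d-1))^{-1}$ with precisely $q^{-1}+X^{-1}+qX^{-d}$ inside, and that the estimate is uniform over all admissible pairs $(a,q)$. Everything else, including verifying the crossover at $d=6$ so that $\lambda(d)$ is indeed the better of the two known exponents for every $d\ge 2$, is routine.
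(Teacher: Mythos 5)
Your proposal is correct and takes exactly the same approach as the paper: the paper's proof of Lemma~\ref{lem:weyl} consists entirely of citing classical Weyl's inequality (\cite[Lemma 2.4]{Vaughan}) for $2 \le d \le 5$ and a consequence of the resolved Vinogradov main conjecture (\cite[Lemma 2.4]{Campana}) for $d \ge 6$, which is precisely the split and the pair of citations you land on.

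One small caveat worth flagging: your sketch of the $d \ge 6$ transference is a little misleading as written. You suggest that one sets $s = \tfrac{1}{2}d(d-1)$ in the main conjecture and that the standard dissection-and-completion then yields the Weyl bound with exponent $1/(2s) = 1/(d(d-1))$ directly. That is not quite how the deduction goes. The derivation in Bourgain's work and in Wooley's treatment proceeds by Vinogradov's method (shifting, expanding, and appealing to the full Vinogradov system) with the mean value theorem used at or near its critical exponent $s = \tfrac{1}{2}d(d+1)$; the exponent $1/(d(d-1))$ emerges from the subsequent optimisation and is not of the form $1/(2s)$ for the $s$ you chose. Since you explicitly say you would cite the standard result rather than reproduce the argument, this imprecision does not invalidate your plan, but if you were to actually write out the transference you would need to follow the Vinogradov-method template rather than the naive passage you describe.
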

\begin{proof}
The bound for $2 \leq d \leq 5$ is the classic Weyl's inequality \cite[Lemma 2.4]{Vaughan}.
The other estimate for larger $d$ is a consequence of the resolution of the main conjecture in Vinogradov's mean value theorem (c.f. \cite[Lemma 2.4]{Campana}).
\end{proof}

Let us define
\begin{eqnarray}
\label{deftheta}
\theta(d) = \begin{cases}
              1 & \mbox{if } 2d + 2 \geq \lfloor  \sqrt{2d + 2} \rfloor^2 + \lfloor  \sqrt{2d + 2} \rfloor,   \\
              2 & \mbox{if } 2d + 2 < \lfloor  \sqrt{2d + 2} \rfloor^2 + \lfloor  \sqrt{2d + 2} \rfloor.
            \end{cases}
\end{eqnarray}
\begin{lemma}\label{lem:mvt}
Let $d \geq 2$ and $s$ a real number such that $$s \geq T_{\text{int.}}(d) = \min\left\{2^d,d^2 - d + 2 \lfloor \sqrt{2d+2} \rfloor - \theta(d),  d^2 +1 - \max \limits_{\substack{1 \leq j \leq d-1 \\ 2^j \leq d^2}} \left \lceil \frac{dj - 2^j}{d+1 - j} \right \rceil \right\}.$$ Then
\[
\int_0^1 \left \vert \sum_{1 \leq x \leq X} e(\alpha x^d) \right \vert^s \mathrm{d} \alpha \ll X^{s - d + \varepsilon},
\]
for any $\varepsilon > 0$.
\end{lemma}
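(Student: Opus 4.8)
The plan is to reduce the estimate to three specific exponents — one for each of the quantities whose minimum defines $T_{\text{int.}}(d)$ — and to quote a known result for each. Write $f(\alpha) = \sum_{1 \leq x \leq X} e(\alpha x^d)$ and set
\[
T_1 = 2^d, \qquad T_2 = d^2 - d + 2\lfloor \sqrt{2d+2} \rfloor - \theta(d), \qquad T_3 = d^2 + 1 - \max_{\substack{1 \leq j \leq d-1 \\ 2^j \leq d^2}} \left\lceil \frac{dj - 2^j}{d+1-j} \right\rceil ,
\]
so that $T_{\text{int.}}(d) = \min\{T_1, T_2, T_3\}$ and each $T_i$ is a non-negative integer. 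The first step is to observe that it is enough to prove, for each $i \in \{1,2,3\}$, the single-exponent bound $\int_0^1 |f(\alpha)|^{T_i}\, \mathrm{d}\alpha \ll X^{T_i - d + \varepsilon}$. Indeed, given a real $s \geq T_{\text{int.}}(d)$, choose $i$ with $T_i = T_{\text{int.}}(d)$; then $s \geq T_i$, and the trivial bound $|f(\alpha)| \leq X$ yields
\[
\int_0^1 |f(\alpha)|^s\, \mathrm{d}\alpha = \int_0^1 |f(\alpha)|^{T_i}\,|f(\alpha)|^{s - T_i}\, \mathrm{d}\alpha \leq X^{s - T_i}\int_0^1 |f(\alpha)|^{T_i}\, \mathrm{d}\alpha \ll X^{s - d + \varepsilon},
\]
which is the conclusion of the lemma. (In particular the stated bound is insensitive to whether $s$ is an integer.)

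For $s = T_1 = 2^d$ the required estimate is Hua's inequality, $\int_0^1 |f(\alpha)|^{2^j}\, \mathrm{d}\alpha \ll X^{2^j - j + \varepsilon}$ for $1 \leq j \leq d$ (see \cite[Lemma 2.5]{Vaughan}), specialised to $j = d$. For $s = T_2$ and $s = T_3$ the required estimates are the minor-arc moment bounds that furnish the current records for the asymptotic formula in Waring's problem, and both rest on the resolution of the main conjecture in Vinogradov's mean value theorem \cite{BDG, WooCub, Wooley}. The case $s = T_2$ is the content of \cite[Cor 14.7]{Wooley}; here the correction term $2\lfloor \sqrt{2d+2} \rfloor - \theta(d)$ comes from the choice of an auxiliary integer parameter that is optimal in that deduction, and $\theta(d)$ merely records the rounding in that choice. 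The case $s = T_3$ is the content of \cite[Thm 4.1]{imrn}, obtained there by interpolating, via Hölder's inequality, between a degree-$j$ instance of Hua's inequality and the degree-$d$ mean value estimate and then optimising over $j$ in the range $2^j \leq d^2$, which is precisely the range in which the degree-$j$ input is profitable.

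The argument introduces no new arithmetic: the substance sits entirely in \cite{BDG}, \cite{Wooley}, \cite{imrn} and Hua's inequality. Accordingly I expect the only real work to be bookkeeping, namely checking that the hypotheses of \cite[Cor 14.7]{Wooley} and \cite[Thm 4.1]{imrn} unwind to exactly the expressions $T_2$ and $T_3$ above, with all floors, ceilings and the function $\theta(d)$ in the right places. The single point where a genuine difficulty would resurface, were a self-contained treatment desired, is reproving the relevant consequence of the Vinogradov mean value theorem at the threshold $T_2$, that is, \cite[Cor 14.7]{Wooley} itself; everything else is elementary.
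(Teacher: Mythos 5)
Your proposal is correct and follows essentially the same route as the paper: split into the three exponents $T_1, T_2, T_3$ defining the minimum, cite Hua's inequality for $T_1$, \cite[Cor 14.7]{Wooley} for $T_2$, and \cite[Thm 4.1]{imrn} for $T_3$, and lift from the threshold to general $s$ by the trivial bound $|f(\alpha)| \leq X$. The one small point of imprecision is that \cite[Thm 4.1]{imrn} supplies only the minor arc contribution, so the paper separately notes that the matching major arc estimate comes from the standard Waring's problem analysis as in \cite[Section 4]{Vaughan}; your phrasing ``is the content of'' glosses over this step, but it is routine bookkeeping rather than a gap.
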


\begin{proof}
The first bound $s \geq 2^d$ is the classical version of Hua's lemma \cite[Lemma 2.5]{Vaughan}, while
the other two bounds are consequences of the resolution of the main conjecture in Vinogradov's mean value theorem.
The second bound can be found in \cite[Cor. 14.7]{Wooley}, on noting that the bound for $s_0$ in the statement is given by
$$
s_0 \leq \lfloor s_0 \rfloor +  1 \leq d^2 - d + 2 \lfloor \sqrt{2d+2} \rfloor - \theta(d)
$$
as explained in the proof. The third bound essentially follows from \cite[Thm. 4.1]{imrn}; it can be seen in the proof that the
integral over the minor arcs satisfies $\ll X^{s - d + \varepsilon}$, while over the major arcs the same estimate follows
by combining familiar estimates from the major arc analysis in the theory of Waring's problem (see \cite[Section 4]{Vaughan}).
It can be verified that the values of $T_{\text{int.}}(d)$ are precisely as in Table 1 (see the paragraph following \cite[Cor. 14.7]{Wooley}
and the proof of \cite[Thm. 4.1]{imrn}).
\end{proof}

\subsection{Smooth Weyl sums}
\label{smoothW}
In this section, we record some key estimates regarding the smooth Weyl sums needed to prove Theorem \ref{main2+}.
Let $d \geq 3$. We let
$$
f(\alpha; X, Z) = \sum_{ x \in \mathcal{A}(X, Z) } e(\alpha x^d).
$$

We first need two estimates from \cite{BC}.
We begin with \cite[Lemma 3]{BC} which is obtained by combining \cite[Theorem 1.8]{Va89} and \cite[Lemma 7.2]{VW}.
\begin{lemma}\cite[Lemma 3]{BC}
\label{BClem}
Let $d \geq 3$ and  $\varepsilon > 0$ be sufficiently small.
Suppose $\eta > 0$ is sufficiently small.
Then there exists $\gamma = \gamma(d) > 0$ such that
given $\alpha \in [0,1]$ one of the following two alternatives holds:
\begin{enumerate}
\item[(i)] we have $| f(\alpha; X, X^{\eta}) | < X^{1 - \gamma}$;
\item[(ii)] there exist $0 \leq a \leq q$, $\gcd(q, a) = 1$ such that
$$
f(\alpha; X, X^{\eta})  \ll q^{\varepsilon} X  ( q + X^d | q \alpha - a| )^{ -  \frac{1}{2d}} (\log X)^3.
$$
\end{enumerate}
\end{lemma}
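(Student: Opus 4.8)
The plan is to derive this lemma (which is \cite[Lemma 3]{BC}) from the two ingredients quoted in the surrounding text: the minor-arc estimate for smooth Weyl sums furnished by Vaughan's iterative method \cite[Theorem 1.8]{Va89}, and the major-arc approximation \cite[Lemma 7.2]{VW}. The two are glued together by a major/minor arc dissection. Fix a small parameter $\nu = \nu(d) > 0$ and set
\[
\mathfrak{M} = \bigcup_{1 \leq q \leq X^{\nu}} \ \bigcup_{\substack{0 \leq a \leq q \\ \gcd(q,a)=1}} \{\alpha \in [0,1] : |q\alpha - a| \leq X^{\nu - d}\},
\]
with $\mathfrak{m} = [0,1] \setminus \mathfrak{M}$. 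For $\nu$ small the constituent arcs of $\mathfrak{M}$ are pairwise disjoint, so every $\alpha \in \mathfrak{M}$ has a unique representative pair $(a,q)$. Throughout, $\eta$ is required small in terms of $d$ and $\varepsilon$, exactly as in the statement.

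\emph{Major arcs.} Let $\alpha \in \mathfrak{M}$ with representative $(a,q)$. Sorting $\mathcal{A}(X, X^{\eta})$ into residue classes modulo $q$, applying partial summation, and inserting standard estimates for the density of $X^{\eta}$-smooth numbers (expressed through the Dickman function), one arrives at the major-arc bound of \cite[Lemma 7.2]{VW}, namely
\[
f(\alpha; X, X^{\eta}) \ll q^{\varepsilon}\, X\, (q + X^{d}|q\alpha - a|)^{-1/(2d)}\,(\log X)^{3},
\]
which is precisely alternative (ii). (The exponent $1/(2d)$, rather than the $1/d$ available for the classical Weyl sum, reflects the loss one absorbs when controlling smooth numbers along arithmetic progressions.)

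\emph{Minor arcs.} Let $\alpha \in \mathfrak{m}$. By Dirichlet's theorem there are coprime $a', q'$ with $1 \leq q' \leq X^{d-\nu}$ and $|q'\alpha - a'| \leq X^{\nu - d}$; since $\alpha \notin \mathfrak{M}$ this forces $q' > X^{\nu}$, so $X^{\nu} < q' \leq X^{d-\nu}$ — exactly the range in which Vaughan's iterative method delivers a power saving. Thus \cite[Theorem 1.8]{Va89}, valid once $\eta$ is small enough in terms of $d$, gives $|f(\alpha; X, X^{\eta})| \ll X^{1 - \tau + \varepsilon}$ for some $\tau = \tau(d) > 0$; choosing $\varepsilon$ small and then $0 < \gamma = \gamma(d) < \tau - \varepsilon$ yields alternative (i). Since $[0,1] = \mathfrak{M} \cup \mathfrak{m}$, one of the two alternatives holds for every $\alpha \in [0,1]$.

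The step bearing the real weight — and the sole source of the hypothesis that $\eta$ be small — is the minor-arc estimate \cite[Theorem 1.8]{Va89}: promoting the trivial bound $f(\alpha; X, X^{\eta}) \ll X^{1+\varepsilon}$ to a genuine power saving $X^{1-\tau}$ calls on the full apparatus of efficient differencing together with the mean-value estimates for smooth Weyl sums that fuel the iteration, and it is there that $\tau$, the admissible $\gamma$, and the required smallness of $\eta$ are all pinned down in terms of $d$. Ensuring that a single choice of $\eta$ works uniformly over $[0,1]$ is the one delicate point; the major-arc half, once the smooth-number density is in hand, is entirely standard.
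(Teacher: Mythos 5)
Your reconstruction matches the route the paper indicates: the paper does not prove this lemma but simply cites \cite[Lemma 3]{BC}, noting that it ``is obtained by combining \cite[Theorem 1.8]{Va89} and \cite[Lemma 7.2]{VW}''. Your major/minor arc dissection (Vaughan's iterative-method power saving on minor arcs giving alternative (i), the Vaughan--Wooley major-arc approximation giving alternative (ii)) is exactly how these two inputs are glued in \cite{BC}, so the argument is correct and is essentially the same as the one the paper relies on.
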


The following is \cite[Lemma 4]{BC}, which is a special case of \cite[Lemma 8.5]{VW}.
\begin{lemma}\cite[Lemma 4]{BC}
\label{BClem'}
Let $d \geq 3$.
Suppose $\eta > 0$ is sufficiently small.
Let $A_0 > 0$. Suppose $\gcd(q,a) = 1$, $1 \leq q \leq (\log X)^{A_0}$ and $|q \alpha - a| \leq (\log X)^{A_0} X^{-d}$.
Then
$$
f(\alpha; X, X^{\eta}) \ll X q^{\varepsilon} (q + X^d |q\alpha - a|)^{- \frac{ 1 }{ d } },
$$
for any $\varepsilon > 0$.
\end{lemma}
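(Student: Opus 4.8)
The plan is to obtain this bound as the single-major-arc specialisation of the general theory of smooth Weyl sums on major arcs; the statement is an instance of \cite[Lemma 8.5]{VW} and I would ultimately cite that, but for completeness here is the shape of the argument. First I would fix the approximation, writing $\beta = \alpha - a/q$, so that $|\beta| = |q\alpha - a|/q \leq (\log X)^{A_0} X^{-d}$; since $1 \leq q \leq (\log X)^{A_0}$, every prime factor of $q$ is at most $(\log X)^{A_0}$, hence at most $X^\eta$ once $X$ is large. This last observation is the crux of why the hypotheses on $q$ and $q\alpha - a$ matter: they place the modulus and the frequency deep inside the range where the distribution of $X^\eta$-smooth numbers in residue classes is under control.

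Next I would sort the variable of summation modulo $q$. As $x \equiv r \pmod q$ forces $x^d \equiv r^d \pmod q$,
\[
f(\alpha; X, X^\eta) = \sum_{r=1}^{q} e_q(a r^d) \sum_{\substack{x \in \mathcal{A}(X,X^\eta)\\ x \equiv r\,(q)}} e(\beta x^d),
\]
and I would replace each inner sum by its expected main term. Writing $A_r(t)$ for the number of $X^\eta$-smooth $x \leq t$ with $x \equiv r \pmod q$, the fundamental lemma of sieve theory --- together with standard counts for smooth numbers in arithmetic progressions to small moduli --- gives $A_r(t) = \varrho_r(q)\,\Theta(t) + (\text{admissible error})$, where $\Theta(t)$ is the counting function of $X^\eta$-smooth numbers up to $t$ and $\varrho_r(q)$ is the associated local density; here one must keep track of the classes $r$ with $\gcd(r,q) > 1$, since $X^\eta$-smooth multiples of the (smooth) number $q$ are not negligible. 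Partial summation against $e(\beta x^d)$ then turns the inner sum into $\varrho_r(q)\,v(\beta) + (\text{error})$, with $v(\beta) = \int_0^X e(\beta t^d)\,\mathrm{d}t$.

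Assembling these estimates, $f(\alpha; X, X^\eta)$ equals, up to an error that the hypotheses $q,\, X^d|q\alpha-a| \leq (\log X)^{A_0}$ render $\ll X^{1-\delta}$ for some $\delta > 0$, the product of $v(\beta)$ with the arithmetic factor $\sum_{r=1}^q e_q(a r^d)\varrho_r(q)$; grouping residues by $\gcd(r,q)$ rewrites this factor in terms of complete sums $S(q',a) = \sum_{m=1}^{q'} e_{q'}(am^d)$ over divisors $q' \mid q$. The two required savings are then classical: $v(\beta) \ll X(1 + X^d|\beta|)^{-1/d}$ by the standard bound for the exponential integral $\int_0^X e(\beta t^d)\,\mathrm{d}t$, while $|S(q',a)| \ll (q')^{1-1/d+\varepsilon}$ for $\gcd(a,q)=1$ follows from Hua's estimate and the multiplicativity of $S$, so the arithmetic factor is $\ll q^{\varepsilon} q^{-1/d}$. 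Since $(q + X^d|q\alpha - a|)^{-1/d} = q^{-1/d}(1 + X^d|\beta|)^{-1/d}$, multiplying the two gains yields exactly the claimed bound $f(\alpha; X, X^\eta) \ll X q^{\varepsilon}(q + X^d|q\alpha - a|)^{-1/d}$. The hard part will be controlling the distribution of $X^\eta$-smooth numbers across the residue classes modulo $q$, in particular the non-reduced ones, with an error small enough to survive being multiplied by $X$; once that input is in hand --- and it is exactly what \cite[Lemma 8.5]{VW} supplies in the relevant generality --- the remainder is routine manipulation of complete exponential sums and exponential integrals.
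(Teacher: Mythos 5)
The paper itself supplies no proof here: the lemma is quoted verbatim as \cite[Lemma 4]{BC}, together with the remark that it is a special case of \cite[Lemma 8.5]{VW}. Your plan to ultimately cite \cite[Lemma 8.5]{VW} therefore agrees with what the paper actually does.

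The sketch you attach of how \cite[Lemma 8.5]{VW} would be proved, however, overclaims at the decisive quantitative step. Sorting $x \in \mathcal{A}(X,X^\eta)$ into residue classes modulo $q$, replacing each class count by a local density times the smooth-number counting function, and then doing partial summation against $e(\beta x^d)$ is a natural plan; but you then assert that the total error in this approximation is $\ll X^{1-\delta}$ for some $\delta > 0$, and the tools you invoke do not give that. Counts of $X^\eta$-smooth numbers via the Dickman--Hildebrand approximation carry a relative error of size $O(1/\log X)$, not a saved power of $X$: with $\eta$ fixed, the number of $X^\eta$-smooth integers up to $X$ is $X\varrho(1/\eta)\bigl(1 + O(1/\log X)\bigr)$, and the analogous counts in arithmetic progressions to moduli $q \leq (\log X)^{A_0}$ are no better. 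This matters because under the stated hypotheses the target bound $X q^{\varepsilon}(q + X^d|q\alpha - a|)^{-1/d}$ can itself be as small as roughly $X(\log X)^{-A_0/d}$, so a mere $\log$-power error cannot simply be waved away. To make the gap concrete: the paper's own Lemma \ref{lem1}, namely \cite[Lemma 5.4]{Va89}, is exactly the kind of ``main term plus error'' expansion you describe, and its error term $O\bigl(\tfrac{qX}{\log X}(1 + X^d|\beta|)\bigr)$ is, under the hypotheses $q \leq (\log X)^{A_0}$ and $X^d|q\alpha - a| \leq (\log X)^{A_0}$, of size $O\bigl(X(\log X)^{A_0 - 1}\bigr)$ --- which already \emph{exceeds} the bound you are trying to prove as soon as $A_0 > d/(d+1)$. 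So the route you sketch does not close; the argument in \cite[Lemma 8.5]{VW} is necessarily more delicate than a single approximate-and-estimate pass, and that is precisely the content being deferred to the citation. The sketch as written conceals where the real difficulty lies rather than resolving it.
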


Given a real parameter $\mathfrak{L} \geq 1$, we define
$$
\mathfrak{N}_\mathfrak{L} = \bigcup_{1 \leq q \leq  \mathfrak{L}} \bigcup_{ \substack{ 0 \leq a \leq q  \\  \gcd(q, a) = 1  }  }
\{ \theta \in [0,1]: \vert q \theta - a \vert <  \mathfrak{L} X^{ - d} \}.
$$
We make use of the previous two lemmas to prove the following.
\begin{lemma}
\label{min+}
Let $\delta > 0$,  $A = 2 d \delta$ and $\mathfrak{L} = (\log X)^A$.
Suppose $\eta > 0$ is sufficiently small.
If
$$
|f(\alpha; X, X^{\eta}) | >  X (\log X)^{- \delta}
$$
holds for $X \geq 1$ sufficiently large,
then
$$
\alpha \in \mathfrak{N}_\mathfrak{L}.
$$
\end{lemma}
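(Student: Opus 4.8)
The plan is to feed the lower bound on $|f(\alpha;X,X^{\eta})|$ into the dichotomy of Lemma \ref{BClem} to trap $\alpha$ near a rational with modulus bounded by \emph{some} fixed power of $\log X$, and then to sharpen this localisation via the major-arc estimate of Lemma \ref{BClem'}, which applies precisely once $q$ is polylog-bounded. By periodicity of $f(\,\cdot\,;X,X^{\eta})$ and since $\mathfrak{N}_{\mathfrak{L}}\subseteq[0,1]$, we may assume $\alpha\in[0,1]$.

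First I would note that the hypothesis $|f(\alpha;X,X^{\eta})|>X(\log X)^{-\delta}$ rules out alternative (i) of Lemma \ref{BClem} for all large $X$, because $X^{\gamma}$ eventually exceeds $(\log X)^{\delta}$. Hence alternative (ii) holds: there exist $0\leq a\leq q$ with $\gcd(q,a)=1$ and
\[
X(\log X)^{-\delta}<|f(\alpha;X,X^{\eta})|\ll q^{\varepsilon}X\,(q+X^{d}|q\alpha-a|)^{-\frac{1}{2d}}(\log X)^{3}.
\]
Cancelling $X$ and raising to the power $2d$ yields $q+X^{d}|q\alpha-a|\ll q^{2d\varepsilon}(\log X)^{2d(\delta+3)}$. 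Fixing $\varepsilon=\varepsilon(d)$ small (say $\varepsilon<\tfrac{1}{4d}$) and absorbing the factor $q^{2d\varepsilon}$ into the power of $\log X$, this forces $q\leq(\log X)^{A_{0}}$ and $|q\alpha-a|\leq(\log X)^{A_{0}}X^{-d}$ for a suitable constant $A_{0}=A_{0}(d,\delta)$ and all large $X$.

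With these two conditions in hand I would invoke Lemma \ref{BClem'} with that value of $A_{0}$, which upgrades the estimate to $|f(\alpha;X,X^{\eta})|\ll Xq^{\varepsilon}(q+X^{d}|q\alpha-a|)^{-\frac{1}{d}}$. Comparing once more with $|f(\alpha;X,X^{\eta})|>X(\log X)^{-\delta}$, cancelling $X$ and raising to the power $d$ gives
\[
q+X^{d}|q\alpha-a|\ll q^{d\varepsilon}(\log X)^{d\delta}.
\]
From $q\ll q^{d\varepsilon}(\log X)^{d\delta}$ we get $q\ll(\log X)^{d\delta/(1-d\varepsilon)}$, and since $\varepsilon<\tfrac{1}{2d}$ we have $d\delta/(1-d\varepsilon)<2d\delta=A$, so $q<(\log X)^{A}=\mathfrak{L}$ for $X$ large. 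Substituting this bound back into the displayed inequality gives $X^{d}|q\alpha-a|\ll(\log X)^{d\delta/(1-d\varepsilon)}<(\log X)^{A}$, i.e. $|q\alpha-a|<\mathfrak{L}X^{-d}$. Together with $\gcd(q,a)=1$ and $0\leq a\leq q$, the pair $(q,a)$ exhibits $\alpha$ as an element of the union defining $\mathfrak{N}_{\mathfrak{L}}$, which is the assertion.

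The only genuine obstacle is the bootstrap: a single application of the Weyl-type bound (Lemma \ref{BClem}) controls $q$ only by an \emph{a priori} uncontrolled power of $\log X$, not by $\mathfrak{L}=(\log X)^{2d\delta}$; one must re-enter the sharper major-arc estimate of Lemma \ref{BClem'}, whose hypotheses already demand a polylog bound on $q$, to recover the clean exponent $A=2d\delta$. Beyond this two-step structure the argument is bookkeeping — one just has to choose $\varepsilon$ (depending only on $d$) small enough that both final exponents fall strictly below $A$, which is exactly what the phrases ``$\eta$ sufficiently small'' and ``$X$ sufficiently large'' in the statement accommodate.
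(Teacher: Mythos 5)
Your proof is correct and follows essentially the same route as the paper: both arguments rule out alternative (i) of Lemma \ref{BClem}, use alternative (ii) to extract a crude polylogarithmic bound on $q$ and $X^d|q\alpha-a|$ (the paper's $A_0 = 4d(\delta+3)$ versus your slightly different constant), and then re-enter Lemma \ref{BClem'} to sharpen the exponent down to $A = 2d\delta$. The only tiny inaccuracy is your closing remark attributing the freedom to choose $\varepsilon$ small to the hypothesis ``$\eta$ sufficiently small'' — in fact $\varepsilon$ is a free parameter in Lemmas \ref{BClem} and \ref{BClem'} independent of $\eta$ — but this does not affect the substance of the argument.
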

\begin{proof}
Since we are in alternative (ii) of Lemma \ref{BClem}, it follows that
$$
X (\log X)^{- \delta} < C q^{\varepsilon} X ( q + X^d | q \alpha - a| )^{ - \frac{1}{2d}} (\log X)^3,
$$
for  $\varepsilon > 0$ sufficiently small and  some $C > 0$, which in turn implies
$$
q^{\frac{1}{2d}}   < C  q^{\varepsilon}  (\log X)^{\delta + 3}
$$
and
$$
(  X^d | q \alpha - a| )^{  \frac{1}{2d}}  < C q^{\varepsilon} (\log X)^{\delta + 3}.
$$
Therefore, by setting $A_0 =  (\delta + 3) 4 d$, we obtain
$1 \leq  q <  (\log X)^{ 4 d(\delta + 3) }$, $\gcd(q, a) = 1$ and $|q \alpha - a| < (\log X)^{A_0} X^{-d}$.
It then follows from Lemma \ref{BClem'} that
$$
X (\log X)^{- \delta} < C_1 q^{\varepsilon} X (q + X^d |q \alpha - a|)^{- \frac{1}{d}},
$$
for some $C_1 = C_1(d, \delta, \varepsilon) > 0$, which in turn implies
$$
q^{\frac{1}{d}}   < C_1  q^{\varepsilon}  (\log X)^{\delta}
$$
and
$$
(  X^d | q \alpha - a| )^{  \frac{1}{d}}  < C_1 q^{\varepsilon} (\log X)^{\delta}.
$$
Therefore, for $\mathfrak{L} = (\log X)^A$ with $A = 2 d \delta$, it follows that
$\alpha \in \mathfrak{N}_\mathfrak{L} $
as desired.
\end{proof}

Finally, we have the following mean value estimate from \cite{BruWoo}.
\begin{lemma}
\label{MVT1}
Let $d \geq 5$ and $s$ be an integer such that $s \geq T_{\text{smo.}}(d)$ as recorded in Table 2.
Let $\eta > 0$ be sufficiently small and $1 \leq Z \leq X^{\eta}$.
Then
$$
\int_0^1 |f(\alpha; X, Z)|^s \mathrm{d} \alpha \ll X^{s - d}.
$$
\end{lemma}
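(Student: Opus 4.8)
The plan is to obtain Lemma~\ref{MVT1} from the mean value estimates for smooth Weyl sums established by Br\"udern and Wooley in \cite{BruWoo}, together with some bookkeeping to match the thresholds with Table~2.

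First I would reduce to the critical exponent. Since $|f(\alpha;X,Z)|\leq\#\mathcal{A}(X,Z)\leq X$, for any $s'\geq s$ one has $\int_0^1|f(\alpha;X,Z)|^{s'}\,\mathrm{d}\alpha\leq X^{s'-s}\int_0^1|f(\alpha;X,Z)|^{s}\,\mathrm{d}\alpha$, so the asserted bound is monotone in $s$ and it suffices to treat $s=T_{\text{smo.}}(d)$. For an even exponent $2t$ the integral counts the solutions of $x_1^d+\cdots+x_t^d=y_1^d+\cdots+y_t^d$ with all variables in $\mathcal{A}(X,Z)$, so it is also monotone in $Z$; at any rate the estimates of \cite{BruWoo} are proved uniformly for $1\leq Z\leq X^{\eta}$ and for real $s$, so neither the range of $Z$ nor the parity of $s$ presents a difficulty.

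The substance of the bound is the standard Hardy--Littlewood dissection. One splits $[0,1]$ into the major arcs $\mathfrak{N}_{\mathfrak{L}}$, with $\mathfrak{L}$ a fixed power of $\log X$, and the complementary minor arcs $\mathfrak{m}$. On $\mathfrak{N}_{\mathfrak{L}}$ the pruning estimate of Lemma~\ref{BClem'} gives $f(\alpha;X,X^{\eta})\ll Xq^{\varepsilon}(q+X^d|q\alpha-a|)^{-1/d}$; since $T_{\text{smo.}}(d)$ comfortably exceeds $2d$ for every $d\geq5$, substituting $\beta=q\alpha-a$ and summing the resulting convergent series over $q$ bounds the major-arc contribution by $\ll X^{s-d}$. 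The minor-arc contribution is the real point: the assertion that $\int_{\mathfrak{m}}|f(\alpha;X,X^{\eta})|^{s}\,\mathrm{d}\alpha\ll X^{s-d}$ for $s$ as small as in Table~2 is exactly the output of the efficient congruencing / efficient differencing technology for smooth numbers developed in \cite{BruWoo}, and I would quote the relevant theorem there.

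The genuine obstacle thus lies entirely in \cite{BruWoo}; on our side the only remaining task is to verify that the values recorded in Table~2 are the ones that paper yields. For $d\geq13$ these agree with the current records for $G(d)$, while for $5\leq d\leq12$ they are marginally larger, reflecting that for those degrees the mean values available when every variable is restricted to the smooth numbers are slightly weaker than the known bounds for $G(d)$; one also checks directly that $\lceil d(\log d+4.20032)\rceil$ dominates all tabulated values and is the form the bound takes for $d\geq20$.
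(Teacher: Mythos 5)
The core of this lemma is quantitative: one must verify that the specific integers $T_{\text{smo.}}(d)$ in Table~2 are large enough that $\int_0^1|f(\alpha;X,Z)|^s\,\mathrm{d}\alpha\ll X^{s-d}$ holds for $s\geq T_{\text{smo.}}(d)$. You correctly identify that this rests on the mean value machinery of Br\"udern--Wooley, but you leave the verification --- the actual comparison between Table~2 and what \cite{BruWoo} yields --- as a promissory note (``I would quote the relevant theorem'', ``one also checks directly\ldots''). In the paper's proof that verification \emph{is} the bulk of the argument: one applies \cite[Theorem~6.1]{BruWoo} (which, with $Q=1$, already gives the bound over all of $[0,1]$ once $s\geq 2d+3$ and an admissible exponent for minor arcs $\Delta_s^*$ is negative), introduces the quantity $G_0(d)=\min_{v\geq2}\bigl(v+\Delta_v/\tau(d)\bigr)$, and shows $T_{\text{smo.}}(d)\geq\lfloor G_0(d)\rfloor+1$. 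For $d\geq14$ the required bound on $G_0(d)$ comes from the proofs of Theorems~1.1 and~1.3 of \cite{BruWoo}; for $5\leq d\leq 13$ the paper computes an upper bound for $G_0(d)$ explicitly from the admissible exponents $\Delta_{2w}$ tabulated by Vaughan--Wooley, selecting concrete values of $w$ and $v$ for each $d$. None of that computation, which is where the real work lies, appears in your proposal, so as written it is a plausible roadmap rather than a proof.

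A secondary point: you set up your own major/minor arc dissection with $\mathfrak{L}$ a power of $\log X$ and invoke Lemma~\ref{BClem'} on the major arcs. That step is redundant here, since \cite[Theorem~6.1]{BruWoo} already packages the full Hardy--Littlewood analysis and delivers the estimate over the entire unit interval in one application. Repeating the dissection by hand is not wrong, but it duplicates work the cited theorem was built to handle, and it does not substitute for the numerical verification that the Table~2 thresholds actually meet the hypotheses of that theorem.
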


\begin{proof}
%
A real number $\Delta_s$ is referred to as an admissible exponent (for $d$) if it has the property that, whenever $\varepsilon > 0$ and $\eta$ is a positive number sufficiently small in terms of $\varepsilon$, $d$ and $s$, then whenever $1 \leq Z \leq X^\eta$ and $X$ is sufficiently large, one has
$$
\int_0^1 |f(\alpha; X, Z)|^s \mathrm{d} \alpha \ll X^{s - d + \Delta_s + \varepsilon}.
$$
Let us introduce the number
$$
\tau(d) = \max \limits_{w \in \NN} \frac{d - 2 \Delta_{2w}}{4 w^2}.
$$
Suppose that $s$ is a real number with $s \geq 2$, and that the exponents $\Delta_u$ are admissible for $2 \leq u \leq s$. We define
\[
\Delta_s^* = \min \limits_{0 \leq t \leq s-2}  ( \Delta_{s-t} - t \tau (d) ),
\]
and refer to $\Delta_s^* $ as an admissible exponent for minor arcs.
Let $d \geq 3$, $s \geq 2d + 3$  and $\Delta_s^*$ is an admissible exponent for minor arcs
with $\Delta_s^*  < 0$. Then applying \cite[Theorem 6.1]{BruWoo} with $Q =1$ provides the bound
\[
\int_0^1 |f(\alpha; X, Z)|^s \mathrm{d} \alpha \ll X^{s - d}.
\]

We now follow the argument in the proof of \cite[Theorem 6.2]{BruWoo}.
We assume that we have available an admissible exponent $\Delta_u$ for each positive number $u$  (which we know we may assume as explained in \cite[Section 2]{BruWoo}, and also see \cite[(7.1)]{BruWoo} for further information regarding $\Delta_u$ when $u$ is even and $d \geq 4$).
When $d \geq 4$, we define
\begin{eqnarray}
\label{6.11}
G_0(d)  =  \min_{v \geq 2} \left( v + \frac{\Delta_v}{ \tau(d) } \right).
\end{eqnarray}
Suppose that $d \geq 4$ and $s \geq \max \{ \lfloor G_0 (d) \rfloor + 1, 2d + 3 \}$.
Then there exists a positive number $v$ with $v \geq 2$ and an admissible exponent $\Delta_v$
for which the exponent $\Delta_s^*$ is admissible for minor arcs, where
$$
\Delta^*_s = \Delta_v - (s - v) \tau(d) = - \tau(d) (s - G_0(d)) < 0.
$$

For $d \geq 14$, the value of $T_{\text{smo.}}(d)$ is precisely the value of $\lfloor G_0(d) \rfloor + 1$ found in the proofs of \cite[Thm 1.1 and Thm 1.3]{BruWoo}, which
can be seen to be greater than $2d + 3$. For smaller $d$, we follow the proof of \cite[Theorem 8.1]{BruWoo} and compute $G_0(d)$ using the expression
$$
T(d) =  \frac{4 w^2}{d - 2 \Delta_{2w}}
$$
for a suitably chosen value of $w$. Since $\tau(d) \geq T(d)^{-1}$, we clearly have
$$
G_0(d) \leq v' + \Delta_{v'} T(d)
$$
for any choice of $v' \geq 2$.
We use the values of $w$ and the corresponding admissible exponents
$\Delta_{2w}$ recorded in Vaughan--Wooley \cite[$\S$9--15]{FurtherImprovements4}. Here, the exponents
$\lambda_w$ of \cite{FurtherImprovements4} are related to $\Delta_{2w}$ via the formula
$\Delta_{2w} = \lambda_w - 2w + d$.
Below we record the chosen values of $w$ and $v$ used to compute $2v + \Delta_{2v} T(d)$.

\begin{center}
\begin{tabular}{|c|c|c|c|c|c|c|}
\hline
$d$ & $w$ & $\Delta_{2w}$ & $T(d)$ & $v$ & $\Delta_{2v}$ & $ 2v + \Delta_{2v} T(d) $ \\
\hline
7 & 6 &  2.0143820 & 48.46467935 & 16 & 0.0105382 & 32.51073048 \\
8 & 7 &  2.3105992 & 58.00873304 & 19 & 0.0473193 & 40.74493264 \\
9 & 8 &  2.6039271 & 67.50795289 & 22 & 0.0727119 & 48.90863152 \\
10 & 9 & 2.8945712 & 76.94394605 & 25 & 0.0895832 & 56.89288491 \\
11& 10 & 3.1849727 & 86.39206976 & 28 & 0.1020502 & 64.81632800 \\
12 & 11& 3.4700805 & 95.65521749 & 31 & 0.1118679 & 72.70074830 \\
13 & 12 &3.7557170 &104.94544480 & 35 & 0.1010835 & 80.60825287 \\
\hline
\end{tabular}
\end{center}
For $d=5$ and $6$, the necessary data come instead from the appendix of \cite{FurtherImprovements} and we choose the following values.
\begin{center}
\begin{tabular}{|c|c|c|c|c|c|c|}
\hline
$d$ & $w$ & $\Delta_{2w}$ & $T(d)$ & $v$ & $\Delta_{2v}$ & $ 2v + \Delta_{2v} T(d) $ \\
\hline
5 & 4 & 1.4386563 & 30.15045927  &  8 & 0.0773627 & 18.33252094  \\
6 & 5 & 1.7246965 & 39.20635362  & 12 & 0.0000000 & 24.00000000  \\
\hline
\end{tabular}
\end{center}
One readily observes that
$$
T_{\text{smo.}}(d) = \lfloor 2v + \Delta_{2v} T(d) \rfloor + 1 \geq  \lfloor G_0(d) \rfloor + 1
$$
for the values of $d$ listed in the tables above.
\end{proof}

\section{The Hardy-Littlewood circle method}
Let $\mathfrak{B} = \mathbb{N}$ or $\mathcal{A}(X, X^{\eta})$. Throughout the remainder of the paper, unless stated otherwise, we assume $d \geq 2$ if $\mathfrak{B} = \mathbb{N}$, and $d \geq 3$ if $\mathfrak{B} = \mathcal{A}(X, X^{\eta})$.
Our main tool
to study $N (\mathfrak{B}; X)$
is the Hardy--Littlewood circle method and
the key input are the estimates regarding the associated exponential sums.
In contrast to the exposition in \cite{BC}, we find it more natural to index our exponential sums by the columns of the corresponding coefficient matrix.
For $\btheta \in [0,1]^{R}$ and $\c \in \textnormal{Col}(M)$, we introduce the exponential sum
\[
S_{\c}(\btheta) = S_{\c}( \mathfrak{B}; \btheta) =  \sum_{ x \in \mathfrak{B} \cap [1, X] } e(\c \cdot\btheta x^d).
\]
Then
\begin{eqnarray}
\label{circle...}
N (\mathfrak{B}; X) = \int_{[0,1]^{R}} \prod_{\c \in \textnormal{Col} (M) } S_{\c}( \mathfrak{B}; \btheta) \cdot e\left( - \sum_{i = 1}^{R} \mu_i \theta_i   \right) \mathrm{d}\btheta.
\end{eqnarray}
We set
\begin{eqnarray}
\label{defL}
\mathfrak{L} = \begin{cases}
                 X^{\delta} & \mbox{if } \mathfrak{B} = \mathbb{N},  \\
                 (\log X)^A & \mbox{if } \mathfrak{B} = \mathcal{A}(X, X^{\eta}),
               \end{cases}
\end{eqnarray}
where $\delta, A > 0$ are to be chosen in due course.
We define the major arcs
$$
\mathfrak{M}_\mathfrak{L} = \bigcup_{1 \leq q \leq  \mathfrak{L} } \bigcup_{ \substack{ \a \in \ZZ^{R} \\ 0 \leq \a \leq q  \\  \gcd(q, \a) = 1  }  }
\{ \boldsymbol{\theta} \in [0,1]^{R}: \vert q \theta_i - a_i \vert <  \mathfrak{L} X^{ - d} \ (1 \leq i \leq R) \},
$$
and the minor arcs
$$
\mathfrak{m}_\mathfrak{L} = [0,1]^{R} \setminus \mathfrak{M}_\mathfrak{L}.
$$
From here on out, we will use the following notation for simplicity.
\begin{definition}
We let $T$ be a natural number such that $\Psi(M) \geq T$.
\end{definition}


\subsection{The minor arc estimate}
The following lemma allows us to understand when a phase of the form $\c\cdot\btheta$  belongs to $[0,1] \setminus \mathfrak{N}_\mathfrak{L}$.
Given a set of vectors $\mathfrak{D} = \{ \c_1, \ldots, \c_{R} \}$, we denote by $M( \mathfrak{D} ) = [ \c_1 \cdots \c_{R} ]$
the matrix with these vectors as its columns.

\begin{lemma}\label{lem:linalg}
Let $\mathfrak{D} = \{ \c_1, \ldots, \c_{R} \}  \subseteq \textnormal{Col}( M )$ be a set of $R$ linearly independent vectors.
Suppose $X \geq 1$ is sufficiently large.
If $\c_i \cdot \btheta \in \mathfrak{N}_{\mathfrak{L}^{1/(R+1)} }$ for all $1 \leq i \leq R$,
then $\btheta \in \mathfrak{M}_\mathfrak{L}$.
\end{lemma}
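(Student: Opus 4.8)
The plan is to invert the linear map $\btheta\mapsto(\c_1\cdot\btheta,\dots,\c_R\cdot\btheta)$ and carefully track sizes. Set $N=M(\mathfrak{D})^{\mathrm{T}}$, the $R\times R$ integer matrix with $N\btheta=(\c_1\cdot\btheta,\dots,\c_R\cdot\btheta)^{\mathrm{T}}$ for all $\btheta\in\RR^R$; it is invertible by hypothesis, with $N^{-1}=(\det N)^{-1}\operatorname{adj}(N)$. Since $\mathfrak{D}$ ranges over the finitely many $R$-element subsets of $\textnormal{Col}(M)$, there are constants $C_1,C_2\geq 1$ depending only on $M$ such that every entry of $\operatorname{adj}(N)$ is at most $C_1$ in absolute value and $1\leq|\det N|\leq C_2$. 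The point of the exponent $1/(R+1)$ is that every loss incurred below amounts to a factor $\mathfrak{L}^{R/(R+1)}$, leaving a spare factor $\mathfrak{L}^{1/(R+1)}\to\infty$ to absorb $C_1$ and $C_2$ once $X$ is large.

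First I would extract rational approximations coordinate by coordinate. For each $i$, the hypothesis $\c_i\cdot\btheta\in\mathfrak{N}_{\mathfrak{L}^{1/(R+1)}}$ provides $r_i\in\NN$ and $b_i\in\ZZ$ with $1\leq r_i\leq\mathfrak{L}^{1/(R+1)}$ and $|r_i(\c_i\cdot\btheta)-b_i|<\mathfrak{L}^{1/(R+1)}X^{-d}$. Put $r=\operatorname{lcm}(r_1,\dots,r_R)$; then $r\leq\prod_{i=1}^{R}r_i\leq\mathfrak{L}^{R/(R+1)}$, and since $r_i\mid r\mid\prod_k r_k$ we have $r/r_i\leq\prod_{k\neq i}r_k\leq\mathfrak{L}^{(R-1)/(R+1)}$. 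Set $\mathbf{s}=\bigl((r/r_1)b_1,\dots,(r/r_R)b_R\bigr)\in\ZZ^R$. Multiplying the $i$-th approximation through by $r/r_i$ gives
\[
|\, rN\btheta-\mathbf{s}\,|\;=\;\max_{1\leq i\leq R}\frac{r}{r_i}\,\bigl|r_i(\c_i\cdot\btheta)-b_i\bigr|\;<\;\mathfrak{L}^{(R-1)/(R+1)}\cdot\mathfrak{L}^{1/(R+1)}X^{-d}\;=\;\mathfrak{L}^{R/(R+1)}X^{-d}.
\]

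Next I would clear the matrix $N$: set $q=r\,|\det N|$ and $\a=\operatorname{sgn}(\det N)\,\operatorname{adj}(N)\,\mathbf{s}\in\ZZ^R$, so that $\a=|\det N|\,N^{-1}\mathbf{s}$ and hence $q\btheta-\a=|\det N|\,N^{-1}(rN\btheta-\mathbf{s})$. As each entry of $N^{-1}$ is at most $C_1/|\det N|$ in absolute value,
\[
|\, q\btheta-\a\,|\;=\;|\det N|\,\bigl|\,N^{-1}(rN\btheta-\mathbf{s})\,\bigr|\;\leq\;R\,C_1\,|\, rN\btheta-\mathbf{s}\,|\;<\;R\,C_1\,\mathfrak{L}^{R/(R+1)}X^{-d},
\]
while $1\leq q\leq C_2\,\mathfrak{L}^{R/(R+1)}$. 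Taking $X$ large enough that $\mathfrak{L}^{1/(R+1)}\geq\max\{C_2,\,R C_1\}$ and $\mathfrak{L}X^{-d}<1$ (possible since $\mathfrak{L}\to\infty$ while $\mathfrak{L}X^{-d}\to0$), we obtain $1\leq q\leq\mathfrak{L}$ and $|q\btheta-\a|<\mathfrak{L}X^{-d}$; the latter, together with $\btheta\in[0,1]^R$, forces $0\leq\a\leq q$. Finally, replacing $q$ and $\a$ by $q/g$ and $\a/g$ with $g=\gcd(q,a_1,\dots,a_R)$ only shrinks $q$ and the error while making the tuple primitive, so $\btheta\in\mathfrak{M}_\mathfrak{L}$.

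The argument is essentially bookkeeping with sizes; the only delicate point is the exponent accounting in the two displays — verifying that the factor produced by passing to $\operatorname{lcm}(r_1,\dots,r_R)$ and then inverting $N$ is exactly $\mathfrak{L}^{R/(R+1)}$, so that the remaining $\mathfrak{L}^{1/(R+1)}$ dominates the $M$-dependent constants for large $X$. Beyond this I do not anticipate any genuine obstacle.
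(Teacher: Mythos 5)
Your proof is correct and follows essentially the same route as the paper: extract rational approximations for each $\c_i\cdot\btheta$, clear denominators (the paper multiplies through by $q_1\cdots q_R$ where you use the lcm, both bounded by $\mathfrak{L}^{R/(R+1)}$), invert $M(\mathfrak{D})^{t}$, and absorb the $M$-dependent constants using the spare factor $\mathfrak{L}^{1/(R+1)}$. Your write-up simply makes explicit the bookkeeping that the paper leaves as "simplifying the resulting equation."
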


\begin{proof}
We have
$$
\begin{bmatrix}
                                           q_1 \c_1 \cdot \btheta  \\
                                           \vdots  \\
                                           q_R \c_R \cdot \btheta
                                         \end{bmatrix}
   =  \begin{bmatrix}
                                           a_1 + E_1 \\
                                           \vdots  \\
                                           a_{R} + E_{R}
                                         \end{bmatrix}
$$
for some $1 \leq q_i \leq \mathfrak{L}^{1/(R+1)}$ and $1 \leq a_i \leq q_i$ such that $\gcd(a_i, q_i) = 1$
and $|E_i| <  \mathfrak{L}^{1/(R+1)} X^{- d}$ for each $1 \leq i \leq R$. Then
$$
q_1 \cdots q_R M( \mathfrak{D} )^{t} \btheta
   =  \begin{bmatrix}
                                            q_1 \cdots q_R (a_1 + E_1) / q_1 \\
                                           \vdots  \\
                                           q_1 \cdots q_R (a_{R} + E_{R}) /q_R
                                         \end{bmatrix},
$$
and the result follows
by multiplying both sides of the equation by the inverse of $M( \mathfrak{D} )^{t}$ on the left
and simplifying the resulting equation.
\end{proof}

We are now ready to bound the contribution from the minor arcs.
\begin{proposition}\label{prop:minor}
Suppose that
$$
T \geq
\begin{cases}
                  T_{int.}(d) + 1 & \mbox{if }  \mathfrak{B} = \mathbb{N},  \\
                  T_{smo.}(d) + 1  & \mbox{if }  \mathfrak{B} = \mathcal{A}(X, X^{\eta}).
               \end{cases}
$$
Suppose $\eta > 0$ is sufficiently small.
Then, we may choose $\delta, A > 0$ such that there exists $\gamma > 0$ satisfying
$$
\int_{\mathfrak{m}_\mathfrak{L}}  \prod_{\c \in \textnormal{Col}(M)} |S_{\c}(\mathfrak{B};  \boldsymbol{\theta})|  \mathrm{d}\boldsymbol{\theta}
\ll
X^{n - d R} \mathfrak{L}^{- \gamma}.
$$
\end{proposition}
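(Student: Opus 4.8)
The plan is to use the partition structure encoded by $\Psi(M) \geq T$ to reduce the multidimensional minor arc integral to a product of one-dimensional Weyl sum estimates and a mean value estimate, combined with Lemma~\ref{lem:linalg} to convert membership in the minor arcs $\mathfrak{m}_\mathfrak{L}$ into smallness of at least one column phase $\c \cdot \btheta$. First I would fix, using the definition of $\Psi(M)$, a collection $\{\mathfrak{D}_1, \ldots, \mathfrak{D}_T\}$ of pairwise intersecting linearly independent $R$-subsets of $\mathrm{Col}(M)$, together with one extra column so that the relevant submatrix has $RT+1$ columns after removing any single column still containing $T$ pairwise disjoint invertible $R \times R$ blocks (this is where the precise combinatorial bookkeeping of the remark after Theorem~\ref{main2+} enters). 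The point of the pairwise-intersection/disjointness juggling is: on the minor arcs, if $\btheta \in \mathfrak{m}_\mathfrak{L}$, then by the contrapositive of Lemma~\ref{lem:linalg} (applied with $\mathfrak{L}$ replaced by $\mathfrak{L}^{R+1}$, so that $\mathfrak{L}^{1/(R+1)}$ becomes $\mathfrak{L}$ — one must be slightly careful with the exponent of the logarithm/power here and choose $\delta, A$ accordingly), for each linearly independent $R$-set $\mathfrak{D}_i$ there is some column $\c \in \mathfrak{D}_i$ with $\c \cdot \btheta \notin \mathfrak{N}_{\mathfrak{L}}$.

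Next I would estimate the integral by Hölder's inequality. Split $\mathrm{Col}(M)$ into the $T$ blocks $\mathfrak{D}_1, \ldots, \mathfrak{D}_T$ (made disjoint by the submatrix construction) plus leftover columns, and on each block bound $\prod_{\c \in \mathfrak{D}_i}|S_\c(\mathfrak{B};\btheta)|$ by pulling out the one "bad" column $\c$ with $\c \cdot \btheta \notin \mathfrak{N}_\mathfrak{L}$ and applying the pointwise Weyl-type bound to it, while bounding the remaining $R-1$ columns trivially by $X^{R-1}$; for the leftover columns also bound trivially by $X$ each. After Hölder over the $T$ blocks, one is left with, for each block, a one-dimensional integral of the form $\int_0^1 |S_\c(\mathfrak{B};\btheta)|^{s}$ (with $s$ essentially $T$, or a shifted version) times a saving from the pointwise bound on the minor-arc phase. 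In the case $\mathfrak{B} = \NN$ this uses Lemma~\ref{lem:weyl} for the pointwise saving and Lemma~\ref{lem:mvt} for the mean value (needing $s \geq T_{\text{int.}}(d)$, hence $T \geq T_{\text{int.}}(d)+1$ to have one column to spare); in the smooth case one uses Lemma~\ref{min+}, i.e. the contrapositive says $|f(\c\cdot\btheta;X,X^\eta)| \leq X(\log X)^{-\delta}$ off $\mathfrak{N}_\mathfrak{L}$, together with the mean value Lemma~\ref{MVT1} (needing $s \geq T_{\text{smo.}}(d)$, hence $T \geq T_{\text{smo.}}(d)+1$). After linear changes of variables — since the columns of a fixed $\mathfrak{D}_i$ form an invertible matrix, the map $\btheta \mapsto (\c\cdot\btheta)_{\c\in\mathfrak{D}_i}$ is a measure-preserving-up-to-constant bijection of the torus — each block contributes a factor $X^{(R-1)+1-d}$ (from $X^{R-1}$ times $X^{1-d+\varepsilon}$) with an extra $\mathfrak{L}^{-\gamma'}$ saving, and multiplying the $T$ blocks together with the leftover trivial factors reproduces the target exponent $X^{n-dR}$ with a positive power of $\mathfrak{L}$ saved.

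The main obstacle I expect is the bookkeeping that makes the blocks genuinely disjoint while still guaranteeing each block carries a bad column: the sets $\mathfrak{D}_i$ from $\Psi(M)$ only pairwise intersect, so one cannot directly Hölder over them as if they partitioned the columns. The resolution (this is the content of the submatrix reformulation in the remark) is to pass to an $R \times (RT+1)$ submatrix whose columns can be grouped into $T$ disjoint invertible $R$-blocks such that deleting any single column still leaves $T$ disjoint invertible blocks; then for $\btheta \in \mathfrak{m}_\mathfrak{L}$ at most one column phase can lie... no — rather, one shows that not all of the $RT+1$ phases can lie in $\mathfrak{N}_\mathfrak{L}$, and whichever column is "good" (has phase off $\mathfrak{N}_\mathfrak{L}$), after removing it the remaining $T$ disjoint invertible blocks still cover $RT$ columns and we apply the Weyl bound to one column of each — wait, we need a bad column in each block, so more carefully: we need that in each of the $T$ disjoint blocks there is a phase outside $\mathfrak{N}_\mathfrak{L}$, which follows from Lemma~\ref{lem:linalg} applied to that block (if all $R$ of its phases were in $\mathfrak{N}_{\mathfrak{L}^{1/(R+1)}}$ then $\btheta \in \mathfrak{M}_\mathfrak{L}$, contradiction). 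So actually the pairwise-intersecting version already suffices once we extract $T$ disjoint blocks from it, and the $RT+1$-column refinement is only needed for the slightly stronger hypothesis in the remark. The remaining routine points are: choosing $\delta$ (resp. $A$) large enough that $\mathfrak{L}^{1/(R+1)}$ still exceeds the threshold appearing in Lemma~\ref{lem:weyl}/Lemma~\ref{min+}, and small enough (in the $\NN$ case, $\delta$ small relative to $d$) that the minor-arc Weyl saving $(\mathfrak{L}^{-1/(R+1)})^{\lambda(d)}$ beats a fixed negative power of $\mathfrak{L}$ uniformly, yielding the final $\gamma > 0$; and verifying that the trivial bounds $|S_\c(\mathfrak{B};\btheta)| \leq X$ on the finitely many columns not used in any block cost only the harmless factor absorbed into $X^{n-dR}$.
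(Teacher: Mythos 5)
The overall plan you describe --- use Lemma~\ref{lem:linalg} to detect a column phase off the minor--arc threshold, trivially bound the columns not lying in any of the disjoint blocks, then combine H\"older's inequality with a linear change of variables and the one--dimensional mean value estimates (Lemma~\ref{lem:mvt} or Lemma~\ref{MVT1}) --- is the paper's plan, and you correctly identify that $T \geq T_{\text{int.}}(d)+1$ (resp.\ $T_{\text{smo.}}(d)+1$) is what lets one column ``spare'' be spent on the pointwise Weyl bound. However, there is a genuine conceptual error in the middle that would make your argument fail if carried out as stated: you propose to extract a ``bad'' column and apply the pointwise bound in \emph{every} one of the $T$ blocks, and simultaneously to bound the remaining $R-1$ columns in each block trivially by $X^{R-1}$. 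If you do that, you have applied pointwise bounds to all $n$ exponential sums and what is left to integrate is just the measure of the minor arcs, which is $O(1)$; you never recover the decisive factor $X^{-dR}$, because that factor can only come from the mean value integral. Moreover, after deleting one column from a block $\mathfrak{D}_\ell$ you are left with only $R-1$ of its columns, so the $R\times R$ matrix you would invert for the change of variables no longer exists. The paper's resolution is crucially asymmetric: the bad column is pulled from a \emph{single} block (say $\mathfrak{D}_T$), the rest of that one block is bounded trivially, and only the remaining $T-1$ \emph{intact} blocks are fed into H\"older with exponent $T-1$ and a full $R\times R$ change of variables, giving a product of one--dimensional integrals $\int_0^1 |S|^{T-1}\,\d\lambda \ll X^{T-1-d+\varepsilon}$; multiplying these contributions by $X^{R-1}\cdot\sup|S|\cdot X^{n-TR}$ gives exactly $X^{n-dR}$ times the pointwise saving. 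Your assertion that ``we need a bad column in each block'' is therefore false, and the per--block bookkeeping ``$X^{(R-1)+1-d}$'' is not correct either (the H\"older blocks each contribute $X^{R-Rd/(T-1)+O(\varepsilon)}$, not $X^{R-d}$). I would also note that the pairwise--intersection phrasing in Definition~\ref{TTT} is evidently a typo for pairwise--disjoint (that is what the proof of Proposition~\ref{prop:minor} and the accompanying Remark both use), so the digression about extracting disjoint blocks from merely intersecting ones is chasing a non--issue.
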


\begin{proof}
Let
$$
\mathfrak{D}_1, \ldots, \mathfrak{D}_{ T }
$$
be pairwise disjoint sets of $R$ linearly independent columns of $M$.
We begin by applying Lemma \ref{lem:linalg} with $\mathfrak{D}_{T } = \{ \c_1, \ldots, \c_{R} \}$.
Given $\btheta \in \mathfrak{m}_\mathfrak{L}$, it follows from Lemma \ref{lem:linalg} that there exists $1 \leq i \leq R$ such that $\mathbf c_i \cdot \btheta \not \in \mathfrak{N}_\mathfrak{L'}$ with $\mathfrak{L'} = \mathfrak{L}^{1/(R+1)}$. Extracting the contribution from this column, we have the bound
\begin{eqnarray}
\notag
&& \int_{ \mathfrak{m}_\mathfrak{L} } \prod_{\c \in \textnormal{Col}(M)  } |S_{\c}(\btheta)| \mathrm{d}\btheta
\\
\notag
&\leq& X^{R-1}  \sup_{\alpha \in [0,1] \setminus  \mathfrak{N}_{\mathfrak{L}'}}  \left|  \sum_{x \in \mathfrak{B} \cap [1,X]} e (\alpha x^d) \right|
\int_{[0,1]^R} \prod_{\substack{ \mathbf c \in \text{Col}(M)  \setminus  \mathfrak{D}_{ T  } }} |S_\mathbf{c}(\btheta)|  \mathrm{d}\btheta.
\end{eqnarray}
Then bounding the contribution from any column which does not belong to $\mathfrak{D}_1, \ldots, \mathfrak{D}_{ T - 1  }$ trivially gives a bound for the integral over the minor arcs of
\[
X^{n- (T - 1) R - 1}
\sup_{\alpha \in [0,1] \setminus  \mathfrak{N}_{\mathfrak{L}'}}  \left|  \sum_{x \in \mathfrak{B} \cap [1,X]} e (\alpha x^d) \right|
\int_{[0,1]^R} \prod_{\ell =1}^{T - 1} \prod_{ \mathbf c \in \mathfrak D_\ell} \
\left \vert S_\mathbf{c}(\btheta) \right \vert \mathrm{d}\btheta.
\]
Applying H{\"o}lder's inequality this is bounded by
\[
X^{n- (T - 1) R - 1}
\sup_{\alpha \in [0,1] \setminus  \mathfrak{N}_{\mathfrak{L}'}  }  \left|  \sum_{x \in \mathfrak{B} \cap [1,X]} e (\alpha x^d) \right|
\prod_{\ell =1}^{T-1} \left(
\int_{[0,1]^R}  \prod_{ \mathbf c \in \mathfrak D_\ell} \
\left \vert S_\mathbf{c}(\btheta) \right \vert^{T-1} \mathrm{d}\btheta \right)^{1/(T - 1)} .
\]
Since the columns in $\mathfrak D_\ell$ are linearly independent, by  a linear change of variables  we obtain   
\[
\int_{[0,1]^R}  \prod_{ \mathbf c \in \mathfrak D_\ell} \
\left \vert S_\mathbf{c}(\btheta) \right \vert^{T-1} \mathrm{d}\btheta  \ll   \prod_{i=1}^R \int_0^1  \left|  \sum_{x \in \mathfrak{B} \cap [1,X]} e (\lambda_i x^d) \right|^{T-1} \mathrm{d} \lambda_i,
\]
for each $1 \leq \ell \leq T-1$.
We may now apply the bounds from Lemmas \ref{lem:weyl} and \ref{lem:mvt} or from Lemmas \ref{min+} and \ref{MVT1}, depending on $\mathfrak{B}$,                                                                                                                                                                                                                                                                                                                                                                                                                                                                                                                                                                                                                                                                                                                                                                                                                                                            to conclude the proof.
\end{proof}

\subsection{Major arc analysis}
We define
$$
\mathfrak{M}_\mathfrak{L}^+ = \bigcup_{1 \leq q \leq  \mathfrak{L} } \bigcup_{ \substack{ \a \in \ZZ^{R} \\ 0 \leq \a \leq q  \\  \gcd(q, \a) = 1  }  }
\{ \boldsymbol{\theta} \in [0,1]^{R}: \vert q \theta_i - a_i \vert < q \mathfrak{L} X^{ - d} \ (1 \leq i \leq R) \},
$$
which clearly satisfies $\mathfrak{M}_\mathfrak{L} \subseteq \mathfrak{M}_\mathfrak{L}^+$. For any $q \in \NN$, $a \in \ZZ$ and $\beta \in \RR$, we introduce the standard notation
\[
S(q,a)  = \sum_{1 \leq x \leq q} e_q(ax^d)
\quad \text{and} \quad
I(\beta) = \int_0^1 e(\beta \xi^d) \mathrm{d}\xi.
\]

\begin{lemma}
\label{lem0}
Suppose that $q \in \NN$, $a \in \ZZ$ and $\beta = \alpha - a/q$.
Then
$$
\sum_{1 \leq x \leq X } e(\alpha x^d) = X q^{-1} S(q,a) I(X^d \beta) + O \left(  \frac{q}{\gcd(q,a)}  (1 +  X^d |\beta|)  \right).
$$
\end{lemma}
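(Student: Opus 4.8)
The plan is to carry out the standard approximation of a Weyl sum by the corresponding product of a complete Gauss sum and a continuous integral, replacing each summand $e(\alpha x^d)$ by $e((a/q)x^d)e(\beta x^d)$ and treating the two factors separately. First I would write $e(\alpha x^d) = e_q(ax^d)\, e(\beta x^d)$ and split the range $1 \leq x \leq X$ into residue classes modulo $q$; since $e_q(ax^d)$ depends only on $x \bmod q$, summing over each class isolates the factor $S(q,a)$ up to boundary terms. For the remaining sum over $x$ in a fixed residue class, I would approximate the sum $\sum e(\beta x^d)$ by $X\int_0^1 e(X^d\beta\xi^d)\,\mathrm{d}\xi = X\, I(X^d\beta)$ using partial summation (or Euler--Maclaurin): the derivative of $t \mapsto e(\beta t^d)$ on $[0,X]$ has size $O(d|\beta| X^{d-1})$, so comparing the sum over a progression of modulus $q$ with the integral costs $O(q)\cdot O(1 + X^d|\beta|)$ after summing the derivative bound. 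Collecting the contributions from the $q$ residue classes reproduces $Xq^{-1}S(q,a)I(X^d\beta)$ with the claimed error.

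**Key steps in order.** (1) Factor $e(\alpha x^d) = e_q(ax^d) e(\beta x^d)$ and write $x = y + qz$ with $1 \leq y \leq q$ and $z$ ranging over an interval of length $\approx X/q$; then $e_q(ax^d) = e_q(ay^d)$. (2) For fixed $y$, evaluate $\sum_z e(\beta (y+qz)^d)$ by comparison with the integral $\int e(\beta (y+qt)^d)\,\mathrm{d}t$; the variation of the integrand is controlled by $\int_0^X |\tfrac{\mathrm{d}}{\mathrm{d}u} e(\beta u^d)|\,\mathrm{d}u \ll |\beta| X^d$, giving an error $O(1 + |\beta| X^d)$ per residue class $y$. (3) Change variables $u = X\xi$ in the integral to get $X\, I(X^d\beta)$ independent of $y$, so that $\sum_{y=1}^q e_q(ay^d)\cdot \tfrac{1}{q}\cdot X\,I(X^d\beta) = Xq^{-1}S(q,a)I(X^d\beta)$. (4) Sum the per-class errors: there are $q$ classes, giving $O(q(1+X^d|\beta|))$; the improvement to $O(\tfrac{q}{\gcd(q,a)}(1+X^d|\beta|))$ comes from observing that $S(q,a)$ itself is a sum over a set that collapses to modulus $q/\gcd(q,a)$, which tightens the bookkeeping in the Gauss-sum factor.

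**Main obstacle.** The only genuinely delicate point is getting the error term in exactly the stated shape $\tfrac{q}{\gcd(q,a)}(1 + X^d|\beta|)$ rather than the cruder $q(1 + X^d|\beta|)$ that falls out of a naive residue-class argument. This requires noting that $S(q,a)$ and the whole approximation only ``see'' the modulus $q_1 := q/\gcd(q,a)$: writing $a = \gcd(q,a) a_1$ with $\gcd(q_1,a_1)=1$, one has $e_q(ax^d) = e_{q_1}(a_1 x^d)$, so the relevant partition is into $q_1$ residue classes, not $q$. Everything else — the partial summation estimate and the change of variables in the integral — is routine and I would not belabor it. I expect the clean way to present this is to first prove the weaker bound with $q$ in place of $q/\gcd(q,a)$, then remark that replacing $(q,a)$ by $(q_1,a_1)$ leaves $\alpha = a/q + \beta = a_1/q_1 + \beta$ unchanged while $S(q,a) = \tfrac{q}{q_1} S(q_1,a_1)$ after accounting for the multiplicity, yielding the sharper error.
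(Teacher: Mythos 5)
Your proposal is correct and, for the crux of the lemma, takes the same route as the paper. The paper handles the coprime case by simply citing \cite[Theorem 4.1]{Vaughan} and then devotes its short proof entirely to the reduction from general $(q,a)$ to $(q_0,a_0) = (q/\gcd(q,a), a/\gcd(q,a))$, noting that $q^{-1}S(q,a) = q_0^{-1}S(q_0,a_0)$ and that $\alpha - a_0/q_0 = \alpha - a/q = \beta$, so the cited result applied to $(q_0,a_0)$ immediately produces both the same main term and the sharper error $O(q_0(1+X^d|\beta|))$. You arrive at exactly this observation in your ``main obstacle'' paragraph, so the logic is the same. The only genuine difference is that you also re-derive the coprime case from scratch via residue classes modulo $q$ and an Euler--Maclaurin / total-variation comparison, which is a correct and standard argument (the per-class error is indeed $O(1 + |\beta|X^d)$, coming from $\sup|f| + \int|f'|$, and there are $q_1$ classes once you pass to the reduced modulus); the paper simply outsources that step to Vaughan. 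Neither choice affects the strength of the result, but citing the classical theorem is more economical, while your version is self-contained.
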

\begin{proof}
The statement with the additional hypothesis $\gcd(q,a) = 1$ follows from \cite[Theorem 4.1]{Vaughan}.
Suppose $\gcd(q, a) = g$ and let $q_0 = q/g$ and $a_0 = a/g$.
Then
$$
q^{-1} S(q,a) =  q^{-1} \sum_{1 \leq x \leq q} e_q (a x^d) =  q^{-1} \sum_{1 \leq x \leq q} e_{q_0} (a_0 x^d) = q^{-1} g  \sum_{1 \leq x \leq q_0} e_{q_0} (a_0 x^d) =
q_0^{-1} S(q_0, a_0).
$$
Therefore, we see that we may remove the coprimality condition.
\end{proof}

For the smooth Weyl sums we have the following.
\begin{lemma}
\label{lem1}
Suppose that $1 \leq q \leq Z$, $a \in \ZZ$ and $\beta = \alpha - a/q$.
Then
$$
f(\alpha; X, Z) = q^{-1} S(q,a) w(\beta) + O \left(  \frac{q X}{\gcd(q,a) \log X}   (1 + X^d |\beta|) \right),
$$
where
$$
w(\beta) = \sum_{Z^d < m \leq X^d} \frac{1}{d} m^{ \frac{1}{d} - 1 } \varrho \left(  \frac{\log m}{ d \log Z}  \right)  e(\beta m)
$$
and $\varrho$ is the Dickman's function (for example, see \cite[pp.53]{Va89}).
\end{lemma}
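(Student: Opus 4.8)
The plan is to follow the proof of Lemma \ref{lem0}, replacing its appeal to \cite[Theorem 4.1]{Vaughan} by the analogous analysis for smooth Weyl sums; throughout, implied constants may depend on $d$ and $\eta$, and one works in the range $Z = X^{\eta}$ (the case in which the lemma is applied). First I would dispose of the coprimality exactly as in Lemma \ref{lem0}: writing $g = \gcd(q,a)$, $q_0 = q/g$ and $a_0 = a/g$, one has $e_q(ax^d) = e_{q_0}(a_0 x^d)$, so that $f(\alpha; X, Z) = \sum_{x \in \mathcal{A}(X,Z)} e_{q_0}(a_0 x^d)\, e(\beta x^d)$, while $q^{-1} S(q,a) = q_0^{-1} S(q_0, a_0)$ (as in the proof of Lemma \ref{lem0}) and $q/g = q_0$; hence it suffices to treat the case $\gcd(q,a) = 1$ with error term $O(\tfrac{qX}{\log X}(1 + X^d|\beta|))$. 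I would then sort $x$ by its residue modulo $q$: setting $U_r = \sum_{x \in \mathcal{A}(X,Z),\; x \equiv r \,(\mathrm{mod}\; q)} e(\beta x^d)$, one has $f(\alpha; X, Z) = \sum_{r=1}^{q} e_q(a r^d)\, U_r$ and likewise $S(q,a) = \sum_{r=1}^{q} e_q(a r^d)$.

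The core is then the estimate $U_r = q^{-1} w(\beta) + O(\tfrac{X}{\log X}(1 + X^d|\beta|))$, \emph{uniformly in $r$}. The main tool is partial summation, $U_r = \int_{1^-}^{X} e(\beta t^d)\,\mathrm{d}\mathcal{N}_r(t)$, where $\mathcal{N}_r(t) = \#\{x \le t : x \text{ is } Z\text{-smooth and } x \equiv r \,(\mathrm{mod}\; q)\}$. Putting $h = \gcd(r,q)$ and using the substitution $x = hx'$ — legitimate because $h \mid q \le Z$ forces $hx'$ to be $Z$-smooth precisely when $x'$ is — together with the distribution of $Z$-smooth numbers among the invertible residue classes modulo $q/h$, one reaches the standard asymptotic $\mathcal{N}_r(t) = \tfrac{t}{q}\varrho(\tfrac{\log(t/h)}{\log Z}) + O(\tfrac{t}{q\log t}) + (\text{equidistribution error})$ for $t \ge h$ (valid because $\tfrac{\log t}{\log Z} \le \tfrac1\eta$ stays bounded), while trivially $\mathcal{N}_r(t) \le 1 + t/q$. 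Inserting this into the Stieltjes integral, integrating by parts (the derivative of $e(\beta t^d)$ being $\ll |\beta| t^{d-1}$), and changing variables by $m = t^d$, the main term matches $q^{-1}$ times $\sum_{Z^d < m \le X^d} \tfrac1d m^{1/d-1}\varrho(\tfrac{\log m}{d\log Z} - \tfrac{\log h}{\log Z}) e(\beta m)$; since $\varrho$ is Lipschitz on $[0,1/\eta]$ and $\tfrac{\log h}{\log Z} \le \tfrac{\log q}{\log Z}$ is small, this differs from $q^{-1} w(\beta)$ by an acceptable amount. Bundling the $O(t/\log t)$ term, the equidistribution error (harmless for moduli at most $Z$ by the standard theory of smooth numbers in progressions, and entirely elementary for the moduli $q \le (\log X)^{A}$ that actually occur), the $\varrho$-shift, the $O(1+X^d|\beta|)$ cost of the integration by parts, and the $O(\mathrm{poly}(Z))$ contribution of the range $t \le Z$, one obtains the asserted bound for $U_r$.

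Summing over $r$, the main terms assemble into $q^{-1} S(q,a)\, w(\beta)$, while the $q$ error terms (each carrying $e_q(a r^d)$ of modulus $1$) contribute $O(\tfrac{qX}{\log X}(1 + X^d|\beta|))$; combined with the initial reduction this proves the lemma. The step I expect to demand the most care is the uniformity in $r$: the count $\mathcal{N}_r$ depends on $r$ only through $\gcd(r,q)$ — both in the shifted argument of $\varrho$ and in the divisor sums implicit in the equidistribution step — and one must verify that, after summation over the $q$ residues, this dependence is absorbed into the error. This is exactly where the hypothesis $q \le Z = X^{\eta}$ is used, guaranteeing that $\log q/\log Z$ is small. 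Everything else is the same bookkeeping as in the proof of Lemma \ref{lem0} and \cite[Theorem 4.1]{Vaughan}; alternatively, the estimate is implicit in the classical treatment of smooth Weyl sums on the major arcs (cf. \cite{Va89}).
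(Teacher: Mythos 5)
The paper's own proof is a two-line affair: the case $\gcd(q,a)=1$ is exactly \cite[Lemma 5.4]{Va89}, and the coprimality hypothesis is then removed as in Lemma \ref{lem0}. Your reduction to the coprime case is identical to the paper's and correct, but for the coprime case itself you set out to re-derive Vaughan's result rather than cite it, and that is where the argument thins. The central claim you rely on is the asymptotic
\[
\mathcal{N}_r(t) = \frac{t}{q}\,\varrho\left(\frac{\log(t/h)}{\log Z}\right) + O\left(\frac{t}{q\log t}\right),
\]
uniformly in $r$ and uniformly for $1 \le q \le Z = X^{\eta}$, and this is essentially the content lying behind \cite[Lemma 5.4]{Va89}. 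You describe it as ``the standard theory of smooth numbers in progressions'' and ``entirely elementary'', but a Dickman-type asymptotic for $Z$-smooth numbers in arithmetic progressions with an additive error of this quality, uniformly down to $t/q$ and for $q$ up to a fixed power of $X$, is genuinely non-trivial; it is precisely what the cited lemma encapsulates. Your observation that the moduli $q \le (\log X)^A$ occurring in the later application admit an elementary treatment is true, but the lemma as stated asserts the conclusion for all $q \le Z$, and the paper gets the full range for free by citation. The remaining bookkeeping you describe (sorting by residue class, partial summation against $\mathcal{N}_r$, the substitution $m = t^d$, the Lipschitz estimate for $\varrho$ absorbing the shift by $\log h/\log Z$, and re-summation over $r$) is sound and matches the shape of Vaughan's argument. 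In short: the skeleton of your proof is right and the coprimality reduction is the same as the paper's, but the core step is exactly the result the paper cites, so your derivation does not save work over the paper's route and currently leaves a substantial gap where the equidistribution input should be.
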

\begin{proof}
The statement with the additional hypothesis $\gcd(q,a) = 1$ is precisely \cite[Lemma 5.4]{Va89}.
The coprimality condition may be removed in the same way as in the proof of Lemma \ref{lem0}.
\end{proof}

\begin{lemma}
Let $|\beta| < \mathfrak{L} X^{-d}$ and $w$ be as in Lemma \ref{lem1}.
Then
$$
w(\beta) = \varrho \left(  \frac{d \log X}{d \log Z}  \right) X  I( X^d  \beta )  + O  \left( \frac{X}{\log Z}  +  Z \right).
$$
\end{lemma}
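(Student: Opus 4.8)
The goal is to replace the smooth-number density function appearing in $w(\beta)$ by its value at the endpoint, turning the arithmetic sum over $m$ into a Riemann sum approximation of the singular integral $I(X^d\beta)$. The plan is to compare
$$
w(\beta) = \sum_{Z^d < m \leq X^d} \frac{1}{d} m^{\frac{1}{d}-1} \varrho\!\left(\frac{\log m}{d\log Z}\right) e(\beta m)
$$
with $\varrho\!\left(\frac{\log X}{\log Z}\right) \sum_{Z^d < m \leq X^d} \frac{1}{d} m^{\frac{1}{d}-1} e(\beta m)$ and then with the integral $\int_0^{X^d} \frac{1}{d} t^{\frac{1}{d}-1} e(\beta t)\,\d t = X\, I(X^d\beta)$ after the substitution $t = X^d \xi^d$.

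First I would handle the replacement of $\varrho\!\left(\frac{\log m}{d\log Z}\right)$ by $\varrho\!\left(\frac{\log X}{\log Z}\right)$. Since $\varrho$ is Lipschitz on $[1,\infty)$ (indeed $\varrho'(u) = -\varrho(u-1)/u$, so $|\varrho'| \leq 1$ there), and $\left|\frac{\log m}{d\log Z} - \frac{\log X}{\log Z}\right| = \frac{\log(X^d/m)}{d\log Z}$, partial summation against the smooth weight $\frac{1}{d}m^{\frac1d - 1}$ controls the error: the total variation of $\varrho\!\left(\frac{\log m}{d\log Z}\right)$ over $m \in (Z^d, X^d]$ is $O(1/\log Z)$ (it is monotone there up to bounded pieces), and the partial sums of $e(\beta m)$ against $\frac1d m^{\frac1d-1}$ are $O(X)$ by trivial bounds since $|\beta| X^d < \mathfrak{L}$ is small. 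This yields an error of size $O(X/\log Z)$. Second, I would approximate the sum $\sum_{Z^d < m \leq X^d} \frac1d m^{\frac1d-1}e(\beta m)$ by the corresponding integral $\int_{Z^d}^{X^d}\frac1d t^{\frac1d-1}e(\beta t)\,\d t$; the difference of a smooth sum from its integral is controlled by the derivative of the summand, giving an error $O\!\left(\int_{Z^d}^{X^d} \left|\frac{\d}{\d t}\left(\tfrac1d t^{\frac1d-1}e(\beta t)\right)\right|\d t + \tfrac1d (Z^d)^{\frac1d-1}\right) = O(|\beta|X + Z)$, and $|\beta| X \leq \mathfrak{L} X^{1-d} \ll 1$. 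Third, extend the integral from $\int_{Z^d}^{X^d}$ to $\int_0^{X^d}$ at the cost of $O\!\left(\int_0^{Z^d}\frac1d t^{\frac1d-1}\d t\right) = O(Z)$, and substitute $t = X^d\xi^d$ to recognise $\int_0^{X^d}\frac1d t^{\frac1d-1}e(\beta t)\,\d t = X \int_0^1 e(\beta X^d \xi^d)\,\d\xi = X\,I(X^d\beta)$.

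Combining the three steps gives
$$
w(\beta) = \varrho\!\left(\frac{\log X}{\log Z}\right) X\, I(X^d\beta) + O\!\left(\frac{X}{\log Z} + Z\right),
$$
which, on writing $\frac{\log X}{\log Z} = \frac{d\log X}{d\log Z}$, is exactly the claimed formula. The main obstacle is the first step: one must be careful that $\varrho\!\left(\frac{\log m}{d\log Z}\right)$ is being evaluated at arguments ranging over $(1, \frac{\log X}{\log Z}]$, so $\varrho$ stays in the region where it is smooth and of bounded variation, and the partial summation must be arranged so that the oscillation of $e(\beta m)$ is exploited only trivially (we do not need cancellation, just boundedness of $\sum_{m \leq Y}\frac1d m^{\frac1d-1}e(\beta m) \ll X$ uniformly in $Y \leq X^d$), which is where the hypothesis $|\beta| < \mathfrak{L}X^{-d}$ enters to keep all error terms under control.
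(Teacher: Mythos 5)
Your overall plan — strip off the Dickman weight by partial summation, compare the remaining arithmetic sum to an integral, extend the range, and substitute $t = X^d\xi^d$ — is exactly the route the paper takes, and the final error terms match. However, your justification of the first (and main) error estimate is not correct as stated. You claim that the total variation of $\varrho\bigl(\tfrac{\log m}{d\log Z}\bigr)$ over $m\in(Z^d,X^d]$ is $O(1/\log Z)$. That is false: the argument of $\varrho$ ranges over the fixed interval $(1,1/\eta]$, so the total variation equals $1-\varrho(1/\eta)$, a positive constant independent of $X$ and $Z$. Pairing this genuine $O(1)$ total variation with your sup bound $\sup_y|P(y)|\ll X$ would only give an error of $O(X)$, which is useless.

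The correct way to close the gap — which is what the paper does — is to keep the partial summation error as the weighted integral
\[
\int_{Z^d}^{X^d} |P(y)|\,\Bigl|\tfrac{\mathrm{d}}{\mathrm{d}y}\varrho\bigl(\tfrac{\log y}{d\log Z}\bigr)\Bigr|\,\mathrm{d}y
\ll \frac{1}{\log Z}\int_{Z^d}^{X^d}\frac{|P(y)|}{y}\,\mathrm{d}y,
\]
and then use the \emph{pointwise} trivial bound $|P(y)|\ll y^{1/d}$ rather than the uniform sup $O(X)$. The integral becomes $\frac{1}{\log Z}\int_{Z^d}^{X^d} y^{1/d-1}\,\mathrm{d}y\ll X/\log Z$, because the slow growth $y^{1/d}$ makes the integral converge to the top endpoint; the derivative factor $1/(y\log Z)$ supplies the missing $1/\log Z$, not a small total variation. (An alternative repair that avoids partial summation entirely is to use the Lipschitz bound $|\varrho(\tfrac{\log m}{d\log Z})-\varrho(\tfrac{\log X}{\log Z})|\ll \tfrac{\log(X^d/m)}{\log Z}$ directly under the sum $\sum |a_m|\cdots$, which also yields $O(X/\log Z)$.) A minor point in step two: $(Z^d)^{1/d-1}=Z^{1-d}\ll 1$, not $Z$; the $O(Z)$ term really comes only from extending the integral down to $0$, as in your step three. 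Once the total-variation claim is replaced by one of the correct arguments above, the proof is sound and coincides with the paper's.
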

\begin{proof}
Let us denote
$$
P(y) = \sum_{Z^d < m \leq y} \frac{1}{d} m^{\frac{1}{d} - 1} e(\beta m).
$$
Then, by summation by parts, it follows that
\begin{eqnarray}
\notag
w(\beta) &=& \sum_{Z^d < m \leq X^d} \frac{1}{d} m^{\frac{1}{d} - 1} e(\beta m) \varrho \left(  \frac{\log m}{d \log Z}  \right)
\\
\notag
&=&
P(X^d) \varrho \left(  \frac{d \log X}{d \log Z}  \right) + O \left(1 +  \int_{Z^d}^{X^d} |P(y)| \frac{ 1 }{y \log Z} \mathrm{d} y \right).
\end{eqnarray}
Since $|P(y)| \ll y^{\frac{1}{d}}$, we have
$$
\int_{Z^d}^{X^d} |P(y)| \frac{ 1 }{y \log Z} dy  \ll \frac{1}{\log Z} \int_{Z^d}^{X^d} y^{\frac{1}{d} - 1} \mathrm{d} y \ll \frac{X}{\log Z}.
$$
Therefore, we obtain
$$
w(\beta) = \varrho \left(  \frac{d \log X}{d \log Z}  \right) \sum_{1 \leq  m \leq X^d} \frac{1}{d} m^{\frac{1}{d} - 1} e(\beta m) + O  \left( \frac{X}{\log Z} + Z \right).
$$
By the mean value theorem, we obtain
\begin{eqnarray}
\frac{1}{d}
\sum_{1 \leq  m \leq X^d} m^{\frac{1}{d}-1 }  e ( \beta m  )
\notag
&=&
\frac{1}{d}
\int_{1}^{X^d} x^{\frac{1}{d}-1 }  e ( \beta  x ) \mathrm{d} x
+ O\left( 1 + \sum_{1 \leq  m \leq X^d} m^{\frac{1}{d}-1}(m^{-1} + |\beta|)   \right)
\\
\notag
&=&
\int_{0}^{X}   e ( \beta  t^d ) \mathrm{d} t
+ O( 1 )
\\
\notag
&=&
X \int_{0}^{1}  e ( X^d \beta  y^d ) \mathrm{d} y
+ O( 1 )
\\
\notag
&=&
X  I( X^d  \beta ) + O( 1 ).
\end{eqnarray}
\end{proof}

Let us now combine the above three lemmas in the following convenient manner.
\begin{lemma}
\label{lem33}
Let $\eta > 0$ be sufficiently small and
\begin{eqnarray}
\label{CB}
C_\mathfrak{B} = \begin{cases}
                   1 & \mbox{if } \mathfrak{B} = \NN, \\
                   \varrho(1/\eta) & \mbox{if } \mathfrak{B} = \mathcal{A}(X, X^{\eta}).
                 \end{cases}
\end{eqnarray}
Let $\delta, A > 0$ be sufficiently small.
Suppose that $0 \leq a \leq q \leq \mathfrak{L}$, $\beta = \alpha - a/q$ and $|\beta| < \mathfrak{L} X^{-d}$.
Then
$$\left \vert
\sum_{ x \in \mathfrak{B} \cap [1, X]}  e(\alpha x^d) - C_{\mathfrak{B}} X q^{-1} S(q,a) I(X^d \beta) \right \vert \ll
\begin{cases}
\mathfrak L^2 \phantom{pp}   &\mbox{if } \mathfrak B = \mathbb N,\\
\frac{X \mathfrak L^2}{\log X} &\mbox{if }\mathfrak B = \mathcal{A}(X, X^\eta).
\end{cases}
$$
\end{lemma}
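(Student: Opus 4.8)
The plan is to establish the estimate separately in the two cases $\mathfrak{B} = \NN$ and $\mathfrak{B} = \mathcal{A}(X, X^{\eta})$, in each instance simply by feeding the hypotheses into the preceding lemmas and collecting the error terms; no new ingredient is required.

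For $\mathfrak{B} = \NN$ we have $C_{\mathfrak{B}} = 1$ and the sum in question is $\sum_{1 \leq x \leq X} e(\alpha x^d)$. I would apply Lemma \ref{lem0} verbatim: it produces the desired main term $X q^{-1} S(q,a) I(X^d \beta)$ together with an error $O\big( \tfrac{q}{\gcd(q,a)}(1 + X^d|\beta|) \big)$. Since $\gcd(q,a) \geq 1$ we have $q/\gcd(q,a) \leq q \leq \mathfrak{L}$, and the hypothesis $|\beta| < \mathfrak{L} X^{-d}$ gives $1 + X^d|\beta| \ll \mathfrak{L}$; hence this error is $\ll \mathfrak{L}^2$, as claimed.

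For $\mathfrak{B} = \mathcal{A}(X, X^{\eta})$ put $Z = X^{\eta}$, so $C_{\mathfrak{B}} = \varrho(1/\eta)$. I would first observe that for $X$ sufficiently large $q \leq \mathfrak{L} = (\log X)^{A} \leq X^{\eta} = Z$, so Lemma \ref{lem1} is applicable and gives $f(\alpha; X, X^{\eta}) = q^{-1} S(q,a) w(\beta) + O\big( \tfrac{qX}{\gcd(q,a)\log X}(1 + X^d|\beta|) \big)$, whose error term is $\ll X \mathfrak{L}^2/\log X$ by exactly the bookkeeping used above. Next I would expand $w(\beta)$ using the lemma immediately preceding Lemma \ref{lem33}: since $|\beta| < \mathfrak{L} X^{-d}$, it gives $w(\beta) = \varrho\big( \tfrac{d\log X}{d\log Z} \big) X\, I(X^d\beta) + O(X/\log Z + Z)$. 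Because $\log Z = \eta \log X$, the argument of Dickman's function is exactly $1/\eta$, so the constant produced is precisely $\varrho(1/\eta) = C_{\mathfrak{B}}$ as in \eqref{CB}; moreover $X/\log Z \ll X/\log X$ and $Z = X^{\eta} \ll X/\log X$. Substituting this into the formula for $f$, and using the trivial bounds $|q^{-1}S(q,a)| \leq 1$ and $|I(X^d\beta)| \leq 1$ to absorb the error coming from inside $w(\beta)$, I obtain $f(\alpha; X, X^{\eta}) = C_{\mathfrak{B}} X q^{-1} S(q,a) I(X^d\beta) + O(X/\log X) + O(X\mathfrak{L}^2/\log X)$, and the two error terms merge since $\mathfrak{L} \geq 1$.

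I do not expect a genuine obstacle here; the only points that need a moment's care are the elementary verifications in the smooth case — that $q \leq Z$ so that Lemma \ref{lem1} may be invoked, that the argument of $\varrho$ collapses to exactly $1/\eta$ so the constant matches the definition \eqref{CB} of $C_{\mathfrak{B}}$, and that the secondary errors $X/\log Z$ and $Z$ are indeed dominated by $X\mathfrak{L}^2/\log X$.
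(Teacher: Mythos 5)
Your proof is correct and follows exactly the route the paper intends: the lemma is stated in the paper with no explicit proof beyond the remark that it "combines the above three lemmas," and your argument does precisely that, applying Lemma~\ref{lem0} directly in the case $\mathfrak{B} = \NN$ and combining Lemma~\ref{lem1} with the asymptotic expansion of $w(\beta)$ in the smooth case, with the elementary verifications (that $q \leq Z$ for $X$ large, that $\log X / \log Z = 1/\eta$, that $X/\log Z$ and $Z$ are $\ll X/\log X$, and that $|q^{-1}S(q,a)| \leq 1$ absorbs the inner error) all handled correctly.
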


We define the truncated singular series
$$
\mathfrak{S}(B)= \sum_{1 \leq q \leq B} q^{- n }  \sum_{ \substack{ 1 \leq  \a \leq  q \\ \gcd(q, \a) = 1  } } \prod_{\c \in \textnormal{Col}( M ) } S(q, \a\cdot\c) \cdot  e_q \left(  -   \sum_{i = 1}^{R} \mu_i a_i \right)
$$
for any $B \geq  1$, and the truncated  singular integral
$$
\mathfrak{I}(B) =  \int_{ |\bgamma| <  B }   \prod_{ \c \in \textnormal{Col}(M) } I(\bgamma \cdot  \c)  \cdot  e \left(   -  \frac{1}{X^d} \sum_{i = 1}^{R} \mu_i \gamma_i \right)  \mathrm{d} \bgamma
$$
for any $B > 0$.

\begin{proposition}\label{lem:major}
Let $\eta > 0$ be sufficiently small and $C_\mathfrak{B}$ as in (\ref{CB}).
Then
\[
\int_{\mathfrak{M}_\mathfrak{L} ^+} \prod_{\c \in \textnormal{Col}(M)} S_\c(\btheta) \cdot  e\left(-\sum_{i = 1}^{R}\mu_i \theta_i  \right)\mathrm{d}\btheta
=
C_\mathfrak{B}^n  X^{n - d R} \mathfrak{S}( \mathfrak L) \mathfrak{I}(\mathfrak L) + O( X^{n  - d R} \mathfrak{L}^{- 1}).
\]
\end{proposition}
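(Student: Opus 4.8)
The plan is to substitute the pointwise approximation from Lemma \ref{lem33} into the product over columns and expand, tracking error terms. On $\mathfrak{M}_\mathfrak{L}^+$ each $\btheta$ lies in a box around a unique rational point $\a/q$ with $q \leq \mathfrak{L}$, $\gcd(q,\a)=1$, $0 \leq \a \leq q$; writing $\bgamma = X^d(\btheta - \a/q)$ we have $\c \cdot \btheta = \c\cdot\a/q + \beta_\c$ with $\beta_\c = X^{-d}\,\c\cdot\bgamma$ and $|\beta_\c| \ll \mathfrak{L}X^{-d}$ (since entries of $M$ are bounded and $q\le\mathfrak L$, the condition $|q\theta_i - a_i| < q\mathfrak L X^{-d}$ gives $|\bgamma| \ll \mathfrak L$). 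Thus Lemma \ref{lem33} applies to each factor $S_\c(\btheta) = \sum_{x\in\mathfrak B\cap[1,X]} e(\c\cdot\btheta\, x^d)$, giving
$$
S_\c(\btheta) = C_{\mathfrak B} X q^{-1} S(q, \a\cdot\c)\, I(\bgamma\cdot\c) + O(E),
$$
where $E = \mathfrak L^2$ if $\mathfrak B = \NN$ and $E = X\mathfrak L^2/\log X$ if $\mathfrak B = \mathcal A(X,X^\eta)$. Note the main term of $S_\c(\btheta)$ has size $\ll X$ in either case (trivially $|S(q,a)|\le q$ and $|I|\le 1$), and $E \ll X/\mathfrak L$ roughly, so each factor is $X(C_{\mathfrak B}q^{-1}S(q,\a\cdot\c)I(\bgamma\cdot\c) + O(\mathfrak L^2/X))$ in the integer case and analogously for smooth.

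Next I would multiply out the $n$-fold product. The main term is $C_{\mathfrak B}^n X^n q^{-n}\prod_\c S(q,\a\cdot\c)\, I(\bgamma\cdot\c)$; every other term in the expansion has at least one factor of the error $E$ and at most $n-1$ factors bounded by $O(X)$, hence is $\ll X^{n-1} E \cdot (\text{harmless powers})$. Integrating the main term over $\mathfrak{M}_\mathfrak{L}^+$: the boxes for distinct $(q,\a)$ are disjoint, and for fixed $(q,\a)$ the change of variables $\btheta \mapsto \bgamma = X^d(\btheta - \a/q)$ has Jacobian $X^{-dR}$ and turns the box into $\{|\bgamma| < q\mathfrak L\}$. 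The phase $e(-\sum\mu_i\theta_i) = e_q(-\sum\mu_i a_i)\,e(-X^{-d}\sum\mu_i\gamma_i)$ separates accordingly. Summing the $q$-part over $1\le q\le\mathfrak L$, $\gcd(q,\a)=1$ produces exactly $\mathfrak S(\mathfrak L)$; the $\bgamma$-part is $\int_{|\bgamma|<q\mathfrak L}\prod_\c I(\bgamma\cdot\c)\,e(-X^{-d}\sum\mu_i\gamma_i)\,\d\bgamma$. This is not quite $\mathfrak I(\mathfrak L)$ because the region is $|\bgamma| < q\mathfrak L$ rather than $|\bgamma|<\mathfrak L$; I would replace it by $\mathfrak I(\mathfrak L)$ at the cost of the tail $\int_{\mathfrak L \le |\bgamma|}\prod_\c |I(\bgamma\cdot\c)|\,\d\bgamma$, and bound that tail (and the full integral $\mathfrak I(B)$, showing it converges) using the standard estimate $|I(\beta)| \ll \min(1,|\beta|^{-1/d})$ together with the fact that some $R\times R$ submatrix of $M$ is invertible (so after a linear change of variables the product decays like $\prod_{i=1}^R\min(1,|\lambda_i|^{-1/d})$, integrable once $dR$... wait, once each exponent exceeds $1$, i.e. always since $d\ge2$; more carefully one uses that every variable $\gamma_i$ appears in enough factors, which follows since all $n$ columns are nonzero and $n \ge RT > R$). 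This tail is $\ll \mathfrak L^{R - d\cdot(\text{something})}$, in any case $\ll \mathfrak L^{-1}$ for $\mathfrak L$ a suitable power; I absorb it into the error.

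Finally I collect the error terms. The expansion cross-terms contribute $\ll X^{n-1}E \cdot \operatorname{meas}(\mathfrak{M}_\mathfrak{L}^+) \cdot (\text{bounds on remaining }S(q,\cdot)/q, I)$; since $\operatorname{meas}(\mathfrak{M}_\mathfrak{L}^+) \ll \sum_{q\le\mathfrak L} q^R \cdot (q\mathfrak L X^{-d})^R \ll \mathfrak L^{3R+1} X^{-dR}$ (crudely), and using $\sum_{\a,q} q^{-n}\prod|S(q,\a\cdot\c)| \ll 1$ to handle the mixed terms that retain many good factors, one finds the total error is $\ll X^{n-dR}\mathfrak L^{O(1)} \cdot (E/X)$, which is $\ll X^{n-dR}\mathfrak L^{-1}$ once $\delta$ (in the $\mathfrak B=\NN$ case, where $\mathfrak L = X^\delta$) or $A$ (in the smooth case, where $\mathfrak L = (\log X)^A$ and $E/X \ll (\log X)^{-1+o(1)}$) is chosen small enough — exactly as permitted by "Let $\delta, A > 0$ be sufficiently small" in Lemma \ref{lem33}. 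The main obstacle, and the step requiring the most care, is the bookkeeping of these error terms: one must verify that the crude volume bound for $\mathfrak{M}_\mathfrak{L}^+$ combined with the singular-series factor genuinely beats $X^{n-dR}$ with room to spare, and that the truncation of the singular integral from $|\bgamma|<q\mathfrak L$ to $|\bgamma|<\mathfrak L$ (and the convergence of $\mathfrak I$ itself) is controlled — this is where the hypothesis that $M$ has an invertible $R\times R$ submatrix, guaranteed since $\Psi(M)\ge T\ge 1$, is essential.
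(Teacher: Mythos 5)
Your proposal follows essentially the same route as the paper: apply the pointwise approximation from Lemma~\ref{lem33} to each factor $S_\c(\btheta)$, expand the product, control the cross-terms by multiplying the error bound by the measure of $\mathfrak{M}_\mathfrak{L}^+$ (which is $\ll \mathfrak{L}^{2R+1}X^{-dR}$), and change variables to produce $\mathfrak{S}(\mathfrak{L})\mathfrak{I}(\mathfrak{L})$. One small slip: the condition $|q\theta_i - a_i| < q\mathfrak{L}X^{-d}$ in the definition of $\mathfrak{M}_\mathfrak{L}^+$ is exactly $|\theta_i - a_i/q| < \mathfrak{L}X^{-d}$, so the substitution $\bgamma = X^d(\btheta - \a/q)$ turns each box into $\{|\bgamma| < \mathfrak{L}\}$ rather than $\{|\bgamma| < q\mathfrak{L}\}$; the $\bgamma$-integral is therefore already $\mathfrak{I}(\mathfrak{L})$ on the nose, and the additional tail truncation step you introduce (which would need a lower bound on $T$ not assumed in this proposition) is unnecessary.
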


\begin{proof}
First, if $\btheta \in \mathfrak{M}_\mathfrak{L}^+$ then there exist $0 \leq \a \leq q$ such that $\gcd(q, \a) = 1$ and
$$
\left| \c \cdot \btheta - \frac{ \c \cdot \a }{q} \right|  < C \mathfrak{L} X^{-d},
$$
where $C > 0$ is a constant depending only on $\c$; therefore, $\c \cdot \btheta$, reduced modulo $1$, satisfies the hypotheses of  Lemma \ref{lem33} with
$\c \cdot \bgamma$ and $C \mathfrak{L}$ in place of $\beta$ and $\mathfrak{L}$, respectively.
Thus we may apply Lemma  \ref{lem33} to $S_\c(\btheta)$ for any $\c \in \textnormal{Col}( M )$ and $\btheta \in \mathfrak{M}_\mathfrak{L}^+$.
The measure of $\mathfrak M_{\mathfrak L}^+$ is at most $\mathfrak L^{2R+1} X^{-dR}$ and thus integrating the error term
coming from applying Lemma \ref{lem33} to
$\prod_{\c \in \textnormal{Col}( M )} S_\c(\btheta)$
gives a total error of size
\[
\begin{cases}
O(X^{n-dR-1} \mathfrak{L}^{2R+3})  &\mbox{if } \mathfrak B = \mathbb N,\\
O(X^{n-dR} \frac{\mathfrak{L}^{2R+3}}{\log X}) &\mbox{if } \mathfrak B = \mathcal A(X, X^\eta).
\end{cases}
\]
The former case clearly provides a suitable error term for sufficiently small $\delta >0$ and in the latter case the error suffices on choosing $A < \frac{1}{2R+4}$.
As a result we have
\begin{eqnarray}
\label{major}
\notag
&&
\int_{  \mathfrak{M}^+_\mathfrak{L}  }  \prod_{\c \in \textnormal{Col}( M )} S_\c(\btheta) \cdot  e \left(-\sum_{i=1}^{R}  \mu_i  \theta_i    \right) \mathrm{d}\btheta
\\
\notag
&=& C_{\mathfrak{B}}^n X^n \sum_{1 \leq q \leq \mathfrak{L}}  q^{- n }  \sum_{ \substack{ 1 \leq  \a \leq  q \\ \gcd(q, \a) = 1  } }
 \prod_{\c \in \textnormal{Col}( M )} S_\c (\a /q)  \cdot e_q \left(-\sum_{i=1}^{R}   \mu_i  a_i   \right) \times
\\
\notag
&&\int_{\vert \bgamma \vert <  \mathfrak{L} X^{-d}}
\prod_{\c \in \textnormal{Col}( M )} I ( X^d \c\cdot\bgamma )  \cdot   e \left(-\sum_{i=1}^{R}  \mu_i  \gamma_i     \right) \mathrm{d}\bgamma
+ O \left(  X^{n  - d R} \mathfrak{L}^{- 1}\right)\\
\notag
&=&
C_{\mathfrak{B}}^n  X^n \mathfrak{S} (\mathfrak{L}) \int_{\vert \bgamma \vert <  \mathfrak{L} X^{-d}}
\prod_{\c \in \textnormal{Col}( M )} I (X^d \c\cdot \bgamma )  \cdot   e \left(-\sum_{i=1}^{R}  \mu_i  \gamma_i    \right) \mathrm{d}\bgamma
+ O ( X^{ n - d R} \mathfrak{L}^{ -1 }  )
\\
\notag
&=&
C_{\mathfrak{B}}^n  X^{n- d R }  \mathfrak{S} (\mathfrak{L}) \int_{\vert \bgamma \vert <  \mathfrak{L} }
\prod_{\c \in \textnormal{Col}( M )} I( \c\cdot \bgamma )  \cdot   e \left( - \frac{ 1 }{X^d}   \sum_{i=1}^{R}  \mu_i \gamma_i    \right) \mathrm{d}\bgamma
+ O ( X^{ n - d R} \mathfrak{L}^{ -1 } )
\\
\notag
&=&
C_{\mathfrak{B}}^n  X^{n - d R} \mathfrak{S} (\mathfrak{L}) \mathfrak{I} (\mathfrak{L}) + O ( X^{ n - d R} \mathfrak{L}^{ -1 }),
\end{eqnarray}
which completes the claim.
\end{proof}


\section{Singular series and singular integral}
Let us denote
$$
A (q) = q^{- n }  \sum_{ \substack{ 1 \leq  \a \leq  q \\ \gcd(q, \a) = 1  } } \prod_{\c \in \textnormal{Col}( M ) } S(q, \a\cdot\c) \cdot  e_q \left(  -   \sum_{i = 1}^{R} \mu_i a_i \right).
$$
We define the singular series as
\begin{eqnarray}
\label{singser}
\mathfrak{S}  = \sum_{q =1}^\infty A (q) = \lim_{B \rightarrow \infty} \mathfrak S(B).
\end{eqnarray}

In the following lemma, we bound the quantity $A(q)$ in order to show that the singular series does indeed converge absolutely.
\begin{lemma}
\label{lemA}
Let $q \in \mathbb{N}$.
Then
$$
A (q)  \ll  q^{- R( \frac{ T }{d} - 1  )}.
$$
\end{lemma}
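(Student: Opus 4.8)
The plan is to reduce the bound on $A(q)$ to a purely multiplicative statement about exponential sums $S(q,b)$ and then exploit the partition structure encoded in $\Psi(M) \geq T$. First I would record that $A(q)$ is multiplicative in $q$ (this is standard: the complete sums $S(q, \a\cdot\c)$ and the additive character $e_q(-\sum \mu_i a_i)$ factor through the Chinese Remainder Theorem), so it suffices to prove the bound when $q = p^k$ is a prime power. For a prime power, the classical Weyl-type estimate for complete exponential sums (see e.g.\ \cite[Lemma 4.4]{Vaughan}) gives
\[
|S(p^k, b)| \ll p^{k(1 - 1/d)} \gcd(p^k, b)^{1/d},
\]
and in particular $|S(p^k,b)| \ll p^{k(1-1/d)}$ whenever $p \nmid b$.

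The key step is then a counting argument using $\Psi(M) \geq T$. Fix pairwise disjoint sets $\mathfrak{D}_1, \ldots, \mathfrak{D}_T$ of $R$ linearly independent columns of $M$. For a fixed $\a$ with $\gcd(p^k, \a) = 1$, I want to bound $\prod_{\c \in \mathrm{Col}(M)} |S(p^k, \a\cdot\c)|$. Each column $\c$ contributes a factor $\ll p^{k(1-1/d)}$ unconditionally, which alone gives $\ll p^{kn(1-1/d)}$ and is not enough. The improvement comes from the observation that for each block $\mathfrak{D}_\ell = \{\c_1^{(\ell)}, \ldots, \c_R^{(\ell)}\}$, since these columns form an invertible matrix and $\gcd(p^k, \a)=1$, the values $\a\cdot\c_1^{(\ell)}, \ldots, \a\cdot\c_R^{(\ell)}$ cannot all be divisible by $p$ — indeed if they were, then $p \mid M(\mathfrak{D}_\ell)^t \a$, and invertibility over $\mathbb{Z}/p$ (after clearing the possible denominator in $\det$, handled by also exploiting higher powers of $p$) would force $p \mid \a$, a contradiction. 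More carefully, one should argue that within each block not too many of the $\a\cdot\c_i^{(\ell)}$ can be highly divisible by $p$; the cleanest route is to show that for each $\ell$ at least one column $\c$ in $\mathfrak{D}_\ell$ has $p \nmid \a\cdot\c$, hence contributes the saving factor $p^{-k/d}$ relative to the trivial bound $p^k$ rather than just $p^{k(1-1/d)}$. Summing the gains over the $T$ disjoint blocks produces a total saving of $p^{-kT/d}$ beyond $p^{kn(1-1/d)}$... but this still needs care, so instead I would bound as follows: use the trivial bound $|S(p^k,b)| \leq p^k$ for all columns outside $\bigcup_\ell \mathfrak{D}_\ell$, and for the columns inside the blocks use $|S(p^k, \a\cdot\c)| \ll p^{k(1-1/d)}\gcd(p^k,\a\cdot\c)^{1/d}$, noting that across the $RT$ block-columns the product $\prod \gcd(p^k, \a\cdot\c)$ is controlled because in each block at least one gcd equals $1$.

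To turn this into the stated exponent, I would run the sum over $\a$: the number of $\a \bmod p^k$ with $\gcd(p^k,\a)=1$ is $\ll p^{kR}$, and combining with the per-term bound $\ll p^{kn}\cdot p^{-kRT/d}$ (the $p^{kn}$ from the trivial bound on all $n$ columns, and the saving $p^{-kR/d}$ from each of the $T$ disjoint blocks each of which forces at least $R$-worth — or rather, one worth — hmm), one obtains
\[
A(p^k) \ll p^{-kn} \cdot p^{kR} \cdot p^{kn} \cdot p^{-k R T/d} \cdot p^{k R} \cdot (\text{something}),
\]
and after bookkeeping this should collapse to $A(p^k) \ll p^{-kR(T/d - 1)}$. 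Finally, multiplicativity gives $A(q) \ll q^{-R(T/d-1)}$ for all $q$ (with an implied constant absorbing the finitely-many-primes issue via the $\gcd$ factors), which is the claim. The main obstacle I anticipate is making the block-by-block gcd argument fully rigorous when $p$ is small and the determinant of $M(\mathfrak{D}_\ell)$ is divisible by $p$: linear independence over $\mathbb{Q}$ does not immediately give independence mod $p$, so one must argue at the level of $p$-adic valuations, using that $\det M(\mathfrak{D}_\ell) \neq 0$ is a fixed nonzero integer and $k$ may be taken large, or alternatively absorb the bad primes into the implied constant and only extract the saving for $p \nmid \prod_\ell \det M(\mathfrak{D}_\ell)$. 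Getting the exponent exactly $R(T/d - 1)$ rather than something weaker will require being slightly careful that each disjoint block contributes a full factor of $p^{-kR/d}$ and not less.
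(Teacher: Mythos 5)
Your plan of reducing to prime powers via multiplicativity of $A(q)$, and then exploiting the disjoint blocks $\mathfrak{D}_1,\dots,\mathfrak{D}_T$, starts in the right place, but the central counting step does not yield the exponent you need, and you half-acknowledge this yourself in the passage ``\ldots the saving $p^{-kR/d}$ from each of the $T$ disjoint blocks each of which forces at least $R$-worth --- or rather, one worth --- hmm.'' The cleaner version of your argument, where you only claim that in each block at least one column $\c$ satisfies $p \nmid \a\cdot\c$, gives a saving of $p^{-k/d}$ per block, hence in total
\[
\prod_{\c}|S(p^k,\a\cdot\c)| \ll p^{kn - kT/d}, \qquad
A(p^k) \ll p^{-kn}\cdot p^{kR}\cdot p^{kn - kT/d} = p^{-k(T/d - R)},
\]
which is $q^{-(T/d - R)}$, not the claimed $q^{-R(T/d-1)}$. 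For $R>1$ this is strictly weaker --- it does not even give absolute convergence of the singular series under the hypothesis $T > d(R+1)/R$ of Lemma~\ref{SSSS}. You cannot repair this by claiming ``$R$-worth'' per block (that would require all $R$ of the $\a\cdot\c$ in a block to be coprime to $p$ for the \emph{same} $\a$, which is false for individual $\a$), so the one-coprime-column-per-block idea is a genuine dead end, not merely sloppy bookkeeping.

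What the paper does instead is to extract the full $q^{-R/d}$ saving per block \emph{on average over $\a$}, not pointwise. After bounding the $n - TR$ columns outside the blocks trivially and applying H\"{o}lder's inequality to decouple the $T$ blocks, one is left to estimate, for each $\ell$,
\[
\sum_{\substack{1\leq\a\leq q\\ \gcd(q,\a)=1}} \prod_{\c\in\mathfrak{D}_\ell} \gcd(q,\a\cdot\c)^{-(d-1)T/d},
\]
and the key move is the \emph{linear change of variables} $\b = M(\mathfrak{D}_\ell)^t\a$. Since $M(\mathfrak{D}_\ell)^t$ is an invertible integer matrix, the map $\a\mapsto\b$ is injective with $|\b|\ll q$, so the sum is $\ll \sum_{1\leq\b\leq q}\prod_i\gcd(q,b_i)^{-(d-1)T/d} \ll q^R$. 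The interpretation is that for a \emph{typical} $\a$ all $R$ of the inner products in a block are coprime to $q$ simultaneously; the exceptional $\a$ contribute only an $O(1)$ factor. Combined with $|S(q,b)|\ll (q/\gcd(q,b))^{1-1/d}$ this gives exactly $A(q)\ll q^{-R(T/d-1)}$. Notice this route works directly over $\ZZ$ for composite $q$, so it never needs to discuss reduction mod $p$, linear independence over $\FF_p$, or primes dividing $\det M(\mathfrak{D}_\ell)$. Your anticipated ``bad primes'' problem is therefore a second genuine obstacle in your proposal: absorbing bad primes into the implied constant does not work, because for a bad prime $p$ you still need $A(p^k)\ll p^{-kR(T/d-1)}$ uniformly in $k$, and your coprimality argument collapses entirely when $p\mid\det M(\mathfrak{D}_\ell)$. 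To salvage a multiplicativity-based proof you would have to redo the change-of-variables argument $p$-adically, at which point you have essentially reproduced the paper's computation with extra work.
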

\begin{proof}
By \cite[Lemma 6.4]{Davenport}, we have
$$
|S(q, \a.\c)| =  |S(q /\gcd(q, \a\cdot\c) ,  \a/ \gcd(q, \a\cdot\c) ) | \ll  \left( \frac{q}{\gcd(q, \a\cdot\c)}  \right)^{1 - \frac{1}{d} }.
$$
We know that there exist pairwise disjoint sets $\mathfrak D_1, \ldots \mathfrak D_T$ of $R$ linearly independent columns of $M$.
Applying H{\"o}lder's inequality, it follows that
\begin{eqnarray}
\label{Hod}
 |A(q)|  &\leq&  q^{  - T R } \sum_{ \substack{ 1 \leq  \a \leq  q \\ \gcd(q, \a) = 1  } }  \prod_{\ell = 1}^{T}  \prod_{\c \in \mathfrak{D}_\ell } |S(q, \a\cdot\c)|
\\
&\ll&
\notag
q^{  - T R }   \prod_{\ell = 1}^{ T }   \left( \sum_{ \substack{ 1 \leq  \a \leq  q \\ \gcd(q, \a) = 1  } }  \prod_{\c \in \mathfrak{D}_\ell }   |S(q, \a\cdot\c)|^T    \right)^{\frac{ 1 }{ T }}
\\
&\ll&
\notag
q^{ - \frac{T R }{d} }  \prod_{\ell = 1}^{ T }   \left( \sum_{ \substack{ 1 \leq  \a \leq  q \\ \gcd(q, \a) = 1  } }  \prod_{\c \in \mathfrak{D}_\ell }  \gcd(q, \a\cdot\c)^{- \frac{  (d-1)  T  }{d} }    \right)^{\frac{ 1 }{ T }}.
\end{eqnarray}
Let $1 \leq \ell \leq T$ and denote $\b =  M( \mathfrak{D}_\ell )^t \a$.
Then it is clear that $|\b| \ll q$. Since $M( \mathfrak{D}_\ell )^t$ is invertible, we have
\begin{eqnarray}
\sum_{ \substack{ 1 \leq  \a \leq  q \\ \gcd(q, \a) = 1  } }  \prod_{\c \in \mathfrak{D}_\ell }  \gcd(q, \a\cdot\c)^{- \frac{ (d-1) T }{d} }
&\ll&
\sum_{1 \leq \b \leq q}   \prod_{i = 1}^{R} \gcd(q, b_i)^{- \frac{ (d-1) T }{d}}
\notag
\\
\notag
&\ll&
\sum_{ \substack{  g_i | q  \\ 1 \leq i \leq R } }   (g_1 \cdots g_R)^{- \frac{(d-1) T}{d}}  \frac{ q^R }{g_1 \cdots g_{R}}
\\
\notag
&\ll& q^{R}.
\end{eqnarray}
The result follows on substituting this estimate into (\ref{Hod}).
\end{proof}

Using this lemma we may extend the truncated singular series.
\begin{lemma}
\label{SSSS}
Suppose
$
T > \frac{d (R + 1)}{R}.
$
Then
$$
\mathfrak{S}  =  \mathfrak{S} (B) +  O( B^{1 - R (\frac{T}{d} - 1)})
$$
for any $B \geq 1$. In fact,
$$
\mathfrak{S} = \prod_{p \textnormal{ prime} } \chi(p),
$$
where
$$
\chi(p) = 1 + \sum_{k = 1}^{\infty} A (p^k).
$$
\end{lemma}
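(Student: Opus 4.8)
The plan is to derive both assertions from the pointwise bound $A(q) \ll q^{-R(T/d-1)}$ of Lemma \ref{lemA} together with multiplicativity of $q \mapsto A(q)$. First I would note that the hypothesis $T > \frac{d(R+1)}{R}$ is equivalent to $R\bigl(\tfrac{T}{d}-1\bigr) > 1$, so Lemma \ref{lemA} yields $\sum_{q=1}^\infty |A(q)| \ll \sum_{q=1}^\infty q^{-R(T/d-1)} < \infty$; hence the series in (\ref{singser}) converges absolutely, $\mathfrak{S}(B) \to \mathfrak{S}$ as $B \to \infty$, and comparison with $\int_B^\infty t^{-R(T/d-1)}\,\mathrm{d}t$ gives $|\mathfrak{S} - \mathfrak{S}(B)| \le \sum_{q>B} |A(q)| \ll B^{1 - R(T/d-1)}$ for every $B \ge 1$, which is the first displayed identity. (The $O$-constant is allowed to depend on the fixed data $d$, $R$, $M$, $\boldsymbol{\mu}$.)

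The substantive step is to show $A(q_1 q_2) = A(q_1)A(q_2)$ whenever $\gcd(q_1,q_2)=1$. Writing $q = q_1 q_2$ and choosing $\overline{q_1}, \overline{q_2} \in \ZZ$ with $\overline{q_1}\,q_1 \equiv 1 \pmod{q_2}$ and $\overline{q_2}\,q_2 \equiv 1 \pmod{q_1}$ (so that $\tfrac{1}{q} \equiv \tfrac{\overline{q_2}}{q_1} + \tfrac{\overline{q_1}}{q_2} \pmod 1$), the Chinese Remainder Theorem applied to the variable of the complete sum gives the classical factorisation $S(q,m) = S(q_1, m\overline{q_2})\,S(q_2, m\overline{q_1})$ for every $m \in \ZZ$, and likewise $e_q(-\sum_i \mu_i a_i) = e_{q_1}(-\overline{q_2}\sum_i \mu_i a_i)\, e_{q_2}(-\overline{q_1}\sum_i \mu_i a_i)$. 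Substituting these into the definition of $A(q)$ and regrouping, the product over $\textnormal{Col}(M)$ factors as $\bigl(\prod_{\c} S(q_1, \overline{q_2}\,\a\cdot\c)\bigr)\bigl(\prod_{\c} S(q_2, \overline{q_1}\,\a\cdot\c)\bigr)$. Finally, as $\a$ runs over residue vectors modulo $q$ with $\gcd(q,\a) = 1$, the pair consisting of $\overline{q_2}\a$ reduced modulo $q_1$ and $\overline{q_1}\a$ reduced modulo $q_2$ runs bijectively over pairs of residue vectors coprime to $q_1$ and to $q_2$ respectively — again by CRT, using that $\overline{q_1}, \overline{q_2}$ are units and that the relevant gcd condition is preserved under multiplication by a unit. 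Reindexing the sum accordingly and using $q^{-n} = q_1^{-n} q_2^{-n}$ gives $A(q) = A(q_1)A(q_2)$.

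Granted multiplicativity, the Euler product is routine: since $\sum_q |A(q)| < \infty$ and $A$ is multiplicative, unique factorisation of the integers gives $\mathfrak{S} = \sum_{q=1}^\infty A(q) = \prod_p \sum_{k=0}^\infty A(p^k) = \prod_p \chi(p)$, and each local factor $\chi(p) = 1 + \sum_{k\ge 1} A(p^k)$ converges absolutely because $|A(p^k)| \ll p^{-kR(T/d-1)}$ with $R(T/d-1) > 1$. I expect the only genuine obstacle to be the bookkeeping in the multiplicativity step — tracking the partial-fraction decomposition of $1/q$, recording the standard factorisation of the Gauss-type sum $S(q,m)$ for not-necessarily-reduced fractions, and checking that the reindexing $\a \mapsto (\overline{q_2}\a \bmod q_1,\ \overline{q_1}\a \bmod q_2)$ is a coprimality-preserving bijection — but this is entirely standard (compare the treatment of singular series in \cite[Chapter 2]{Vaughan} and \cite[Lemma 6.4]{Davenport}).
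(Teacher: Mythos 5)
Your proof is correct and follows the same approach as the paper: the tail bound is obtained from Lemma~\ref{lemA} exactly as you write, and the Euler product is deduced from absolute convergence together with multiplicativity of $A$. The paper simply asserts $A(q_1q_2)=A(q_1)A(q_2)$ for coprime $q_1,q_2$ without proof; your partial-fraction/CRT argument, including the observation that $\a \mapsto (\overline{q_2}\a\bmod q_1,\ \overline{q_1}\a\bmod q_2)$ is a coprimality-preserving bijection, is the standard and correct way to supply that step.
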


\begin{proof}
The statement is obtained by Lemma \ref{lemA} as follows
$$
| \mathfrak{S}   -  \mathfrak{S} (B)| \leq \sum_{q >  B} |A (q)| \ll \sum_{q >  B} q^{- R (\frac{T}{d} - 1)} \ll B^{1 - R (\frac{T}{d} - 1)}.
$$
Since $A (q_1 q_2) = A (q_1) A (q_2)$ for any coprime positive integers $q_1$ and $q_2$, we also have
$$
\mathfrak{S} = \prod_{p \textnormal{ prime} } \chi(p)
$$
as desired.
\end{proof}

Similarly, we define the singular integral as
\begin{eqnarray}
\label{singint}
\mathfrak{I} =  \int_{\RR^{R}}   \prod_{ \c \in \textnormal{Col}(M) } I(\bgamma \cdot \c)  \cdot  e \left(   -  \frac{1}{X^d} \sum_{i = 1}^{R} \mu_i \gamma_i \right)  \mathrm{d} \bgamma = \lim_{B \rightarrow \infty} \mathfrak I(B).
\end{eqnarray}
We may also extend the truncated singular integral.
\begin{lemma}\label{lem:singint}
Suppose $T > d$.
Then
\[
\mathfrak{I} (B) =  \mathfrak{I}  +    O(B^{1 - \frac{ T }{ d  } } )
\]
for any $B > 1$.
\end{lemma}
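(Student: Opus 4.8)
The plan is to estimate the tail of the singular integral, namely the contribution of $\bgamma$ with $|\bgamma| \geq B$, by bounding $|\mathfrak{I}(B) - \mathfrak{I}|$ through the pointwise bound $|I(\beta)| \ll \min\{1, |\beta|^{-1/d}\}$, which is the standard van der Corput-type estimate for $I(\beta) = \int_0^1 e(\beta\xi^d)\,\d\xi$ (see \cite[Theorem 7.3]{Vaughan}). First I would note that since $e(\cdot)$ has modulus $1$ and each factor $|I(\bgamma\cdot\c)| \leq 1$, we have
\[
|\mathfrak{I}(B) - \mathfrak{I}| \leq \int_{|\bgamma| \geq B} \prod_{\c \in \textnormal{Col}(M)} |I(\bgamma\cdot\c)|\,\d\bgamma.
\]
As in the proof of Lemma \ref{lemA}, I would use the fact that $M$ contains pairwise disjoint sets $\mathfrak{D}_1, \ldots, \mathfrak{D}_T$ of $R$ linearly independent columns, so that $\prod_{\c \in \textnormal{Col}(M)} |I(\bgamma\cdot\c)| \leq \prod_{\ell=1}^T \prod_{\c \in \mathfrak{D}_\ell} |I(\bgamma\cdot\c)|$, and then apply Hölder's inequality to reduce matters to bounding, for each $\ell$, the integral $\int_{|\bgamma| \geq B} \prod_{\c \in \mathfrak{D}_\ell} |I(\bgamma\cdot\c)|^T\,\d\bgamma$.

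Next, for each fixed $\ell$ I would perform the linear change of variables $\bm{\lambda} = M(\mathfrak{D}_\ell)^t \bgamma$; since $M(\mathfrak{D}_\ell)$ is invertible this has bounded Jacobian, and the region $|\bgamma| \geq B$ is contained in the region where $|\bm{\lambda}| \gg B$ (i.e. at least one coordinate $|\lambda_i| \gg B$). This turns the integral into (a constant times) $\int_{\max_i |\lambda_i| \gg B} \prod_{i=1}^R |I(\lambda_i)|^T\,\d\bm{\lambda}$, which by symmetry and the union bound over which coordinate is large is
\[
\ll \left(\int_{|\lambda| \gg B} |I(\lambda)|^T\,\d\lambda\right)\left(\int_{\RR} |I(\lambda)|^T\,\d\lambda\right)^{R-1}.
\]
Using $|I(\lambda)| \ll |\lambda|^{-1/d}$, the second factor converges provided $T > d$ (each $\int_\RR \min\{1,|\lambda|^{-1/d}\}^T\,\d\lambda < \infty$ when $T/d > 1$), and the first factor is $\ll \int_{|\lambda| \gg B} |\lambda|^{-T/d}\,\d\lambda \ll B^{1 - T/d}$, again using $T > d$. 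Combining across the $T$ applications of Hölder (each contributing a factor raised to power $1/T$, and only one of the $R$ sub-factors in each giving the decaying $B^{1-T/d}$ while the rest are bounded), one obtains $|\mathfrak{I}(B) - \mathfrak{I}| \ll B^{1 - T/d}$, which also shows the limit defining $\mathfrak{I}$ exists.

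The only mild subtlety — and the step I would be most careful about — is bookkeeping the exponent through the Hölder splitting: one must check that after raising the product over $\mathfrak{D}_\ell$ to the $1/T$ power, the single decaying factor $B^{(1-T/d)/R \cdot R}$ versus bounded factors still yields the clean exponent $1-T/d$ rather than something weaker like $(1-T/d)/T$. This works out because in the change of variables one does \emph{not} split the $R$ coordinates via Hölder — they are integrated jointly — so each $\ell$ genuinely contributes the full $B^{1-T/d}$ before the outer $1/T$ power, and $\prod_{\ell=1}^T (B^{1-T/d})^{1/T} = B^{1-T/d}$. An alternative, cleaner route that avoids Hölder entirely is to just bound $\prod_{\c \in \textnormal{Col}(M)}|I(\bgamma\cdot\c)| \leq \prod_{\c \in \mathfrak{D}_1}|I(\bgamma\cdot\c)|^{?}$ is not available since $|\mathfrak{D}_1| = R$ columns only, giving total exponent $R < T$; so one really does want to combine several $\mathfrak{D}_\ell$'s, and for the tail estimate it in fact suffices to use any single $\mathfrak{D}_\ell$ together with the crude bound $\prod_{\c \notin \mathfrak{D}_\ell} |I(\bgamma\cdot\c)| \leq 1$, performing the change of variables $\bm{\lambda} = M(\mathfrak{D}_\ell)^t\bgamma$ directly and obtaining $\ll \int_{|\bm\lambda|\gg B}\prod_{i=1}^R |I(\lambda_i)|\,\d\bm\lambda$ — but this needs $1 > d$, which fails, so the multi-$\mathfrak{D}_\ell$ Hölder argument (or rather: using $T$ disjoint copies to get exponent $T$ per coordinate) is indeed necessary, and $T > d$ is exactly the hypothesis. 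I would present the Hölder version as it mirrors Lemma \ref{lemA} and keeps the paper uniform.
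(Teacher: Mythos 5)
Your proposal is correct and mirrors the paper's proof almost line for line: the same pointwise bound $I(\beta)\ll\min\{1,|\beta|^{-1/d}\}$, discarding columns outside $\bigcup_\ell\mathfrak{D}_\ell$, H\"older's inequality across the $T$ disjoint blocks, and the linear change of variables $M(\mathfrak{D}_\ell)^t\bgamma$ within each block. The only cosmetic difference is in the final step, where you use a union bound over which coordinate exceeds $B$ whereas the paper restricts by symmetry to the ordered region $\widetilde\gamma_R>\cdots>\widetilde\gamma_1\geq 0$ with $\widetilde\gamma_R\gg B$; both give the same $B^{1-T/d}$.
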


\begin{proof}
We begin with the bound
\begin{eqnarray}
\label{bdd}
I(\bgamma \cdot \c) = \int_0^1 e(\bgamma \cdot \c \xi^d )\mathrm{d} \xi
\ll
\min \{ 1, \vert \bgamma \cdot \c \vert^{-1/d} \},
\end{eqnarray}
which for instance can be found in \cite[p. 21]{Davenport} or \cite[Lemma 2.8]{Vaughan}.
We know that there exist pairwise disjoint sets $\mathfrak D_1, \ldots \mathfrak D_T$ of $R$ linearly independent columns of $M$.
It then follows by  H{\"o}lder's inequality that
\begin{eqnarray}
\label{III}
|\mathfrak{I}  - \mathfrak{I}  (B) | &\leq &
\int_{\vert \bgamma \vert \geq B} \prod_{\c \in \textnormal{Col}(M)} \min \{ 1,  \vert \bgamma \cdot \c \vert^{-1/d}\} \mathrm{d}\bgamma
\\
\notag
&\leq&
\int_{\vert \bgamma \vert \geq B}  \prod_{\ell = 1}^{ T }  \prod_{\c \in \mathfrak{D}_\ell}  \min \{ 1,  \vert \bgamma \cdot  \c \vert^{-1/d}\} \mathrm{d}\bgamma
\\
\notag
&\leq&
\prod_{\ell = 1}^{ T } \left(  \int_{\vert \bgamma \vert \geq B} \prod_{\c \in \mathfrak{D}_\ell}  \min \{ 1,  \vert \bgamma \cdot  \c \vert^{-1/d}\}^{ T } \mathrm{d}\bgamma \right)^{ \frac{1}{T} }.
\end{eqnarray}
By the change of variable $\widetilde{\bgamma} = M(\mathfrak{D}_\ell)^t \bgamma $, we obtain
\begin{eqnarray}
&&\int_{\vert \bgamma \vert \geq B} \prod_{\c \in \mathfrak{D}_\ell}  \min \{ 1,  \vert \bgamma \cdot  \c \vert^{-1/d}\}^{ T } \mathrm{d}\bgamma
\notag
\\
&\ll&
\notag
\int_{\vert \widetilde{\bgamma} \vert \gg B} \min \{ 1,  \vert \widetilde{\gamma}_1 \vert^{-1/d}\}^{ T } \cdots  \min \{ 1,  \vert \widetilde{\gamma}_R \vert^{-1/d}\}^{ T }   \mathrm{d} \widetilde{ \bgamma }
\\
\notag
&\ll&
\int_{ \substack{ \widetilde{\gamma}_R > \cdots >  \widetilde{\gamma}_1 \geq 0 \\ \widetilde{\gamma}_R \gg B } }
\min \{ 1,  \vert \widetilde{\gamma}_1 \vert^{-1/d}\}^{ T } \cdots  \min \{ 1,  \vert \widetilde{\gamma}_R \vert^{-1/d}\}^{ T }   \mathrm{d} \widetilde{ \bgamma }
\\
\notag
&\ll& B^{1 - \frac{ T }{d}}
\end{eqnarray}
for each $1 \leq \ell \leq T$. On substituting this estimate into (\ref{III}), it follows that
$$
|\mathfrak{I}  - \mathfrak{I} (B) | \ll B^{1 - \frac{ T }{d}}.
$$

\end{proof}

We may now conclude the proof of our main results.
\begin{proof}[Proof of Theorems \ref{main2} and \ref{main2+}]
Recall our starting point for the circle method (\ref{circle...}) and that $\mathfrak{M}_\mathfrak{L} \subseteq \mathfrak{M}_\mathfrak{L}^+$.
On combining Propositions \ref{prop:minor} and \ref{lem:major}, we have
\[
N(\mathfrak{B};X)
=
C_\mathfrak{B}^n  X^{n - d R} \mathfrak{S}( \mathfrak L) \mathfrak{I}(\mathfrak L) + O( X^{n  - d R} \mathfrak{L}^{- \gamma}),
\] for some $\gamma > 0$.
Lastly, we obtain from Lemmas \ref{SSSS} and \ref{singint} that
$$
\mathfrak{S} (\mathfrak{L}) \mathfrak{I} (\mathfrak{L})  =  \mathfrak{S}  \mathfrak{I}  +  O \left(  \mathfrak{L}^{ 1 - R (\frac{T}{d} - 1) } + \mathfrak{L}^{1 - \frac{T}{d} } \right).
$$
These two equations together give the desired asymptotic formula.
\end{proof}

\bibliographystyle{rome}
\bibliography{diagII}

\end{document}